\date{\today}
\theoremstyle{plain}
\newtheorem{thm}{Theorem}[section]
\newtheorem{prop}[thm]{Proposition}
\theoremstyle{definition}
\theoremstyle{remark}
\newtheorem{rem}{Remark}[section]
\numberwithin{equation}{section}
\renewcommand{\u}{{\bf u}}
\renewcommand{\H}{{\bf H}}
\newcommand{\g}{{\bf g}}
\newcommand{\dv}{{\rm div }}
\newcommand{\cu}{{\rm curl\, }}
\begin{document}

\title[Incompressible limit of MHD equations]
{Incompressible   limit of the compressible
magnetohydrodynamic equations with vanishing viscosity coefficients}

\author{Song Jiang}
\address{LCP, Institute of Applied Physics and Computational Mathematics, P.O.
 Box 8009, Beijing 100088, P.R. China}
 \email{jiang@iapcm.ac.cn}

\author{Qiangchang Ju}
\address{Institute of Applied Physics and Computational Mathematics, P.O.
 Box 8009-28, Beijing 100088, P.R. China}
 \email{qiangchang\_ju@yahoo.com}

 \author[Fucai Li]{Fucai Li$^\ast$}
\address{Department of Mathematics, Nanjing University, Nanjing
 210093, P.R. China}
 \email{fli@nju.edu.cn}
\thanks{$^*$ Corresponding  author}

\keywords{compressible MHD equations, ideal incompressible MHD equations,
 low Mach number, zero viscosities}

\subjclass[2000]{76W05, 35B40}

\begin{abstract}
This paper is concerned with the  incompressible  limit of the compressible
magnetohydrodynamic equations with vanishing viscosity coefficients and  general initial data  in the
whole space $\mathbb{R}^d$ $ (d=2$ or $3$). It is rigorously showed that, as the Mach
number, the shear viscosity coefficient and the magnetic diffusion
coefficient simultaneously go to zero,
 the weak solutions of the compressible magnetohydrodynamic equations
 converge to the strong solution of the ideal incompressible magnetohydrodynamic
 equations as long as the latter exists.

 \end{abstract}

\maketitle

\section{Introduction}
Magnetohydrodynamics (MHD) studies  the  dynamics of compressible
quasineutrally ionized fluids under the influence of electromagnetic
fields. The applications of MHD cover a very wide
range of physical objects, from liquid metals to cosmic plasmas.
The compressible viscous magnetohydrodynamic
equations in the isentropic case take the form (\!\cite{KL,LL,PD})
\begin{align}
&\partial_t \tilde{\rho}  +\dv(\tilde{\rho}\tilde{\u})=0, \label{a1a} \\
&\partial_t
(\tilde{\rho}\tilde{\u})+\dv(\tilde{\rho}\tilde{\u}\otimes\tilde{\u})+\nabla
\tilde{P}  =(\cu\tilde{ \H})\times \tilde{\H}+\tilde{\mu}\Delta\tilde{\u}
+(\tilde{\mu}+\tilde{\lambda})\nabla(\dv\tilde{\u}), \label{a1b} \\
&\partial_t \tilde{\H}
-\cu(\tilde{\u}\times\tilde{\H})=-\cu(\tilde{\nu}\,
\cu\tilde{\H}),\quad \dv\tilde{\H}=0.\label{a1c}
\end{align}
Here $x\in \mathbb{R}^d, d=2 $ or $3$,  $ t>0$,  the unknowns
$\tilde{\rho}$ denotes the density,
$\tilde{\u}=(\tilde{u}_1,\dots,\tilde{u}_d)\in
 \mathbb{R}^d$ the velocity, and $\tilde{\H}=(\tilde{H}_1,\dots, \tilde{H}_d)\in  \mathbb{R}^d$ the
magnetic field, respectively. The constants  $\tilde{\mu}$ and
$\tilde{\lambda}$ are the shear and bulk viscosity coefficients of
the flow, respectively, satisfying $\tilde{\mu}>0$ and
 $2\tilde{\mu}+d\tilde{\lambda}\geq 0$; the constant $\tilde{\nu}>0$ is the magnetic
diffusivity acting as a magnetic diffusion coefficient of the
magnetic field. $\tilde{P}(\tilde{\rho})$ is the pressure-density
function  and here we consider the case
\begin{equation}
\tilde{P}(\tilde{\rho})=a \tilde{\rho}^\gamma, \quad a>0, \gamma >1.
\label{aad}
\end{equation}

There are a lot of studies on the compressible MHD equations in the
literature. Here we mention some results on the multi-dimensional
case. For the two-dimensional case, Kawashima~\cite{K} obtained the
global existence of smooth solutions to the general
electromagnetofluid equations when the initial data are small
perturbations of some given constant state. For the
three-dimensional compressible MHD equations,
  Umeda, Kawashima and Shizuta~\cite{UKS} obtained the global
existence and the time decay of smooth solutions to the linearized
MHD equations. Li and Yu \cite{LY} obtained the optimal decay rate
of classical solutions to the compressible MHD equations around a
constant equilibrium. The local strong solution to the
  compressible MHD equations  with general initial data was
obtained by Vol'pert and Khudiaev~\cite{VK}, and Fan and Yu \cite{FY1}.
Recently, Hu and Wang~\cite{HW1,HW2}, and Fan and Yu \cite{FY2}
established the existence of global weak solutions to the compressible
MHD equations with general initial data; while in \cite{ZJX} Zhang, Jiang and Xie
discussed a MHD model describing the screw pinch problem in plasma physics
and showed the global existence of weak solutions with symmetry.

From the physical point of view, one can formally derive the
incompressible models from the compressible ones when the Mach
number goes to zero and the density becomes almost constant.
Recently, Hu and Wang \cite{HW3} obtained the convergence of weak
solutions of the compressible MHD equations \eqref{a1a}-\eqref{a1c}
to the weak solution of the incompressible viscous MHD equations. In
\cite{JJL}, the authors  have employed the modulated energy method
to verify the limit of weak solutions of the compressible MHD
equations \eqref{a1a}-\eqref{a1c} in the torus to the strong
solution of the incompressible viscous or partial viscous MHD
equations (the shear viscosity coefficient is zero but magnetic
diffusion coefficient is a positive constant). It is worth
mentioning that the analysis in \cite{JJL} turns out that the
magnetic diffusion is very important to control the interaction
between the oscillations and the magnetic field in the torus. The
rigorous justification of convergence for compressible MHD equations
  to  the \emph{ideal incompressible MHD equations} (inviscid, incompressible MHD
equations)  in the torus is left open in \cite{JJL} for general
initial data.

The purpose of this paper is to derive the ideal incompressible
MHD equations  from the compressible MHD equations
\eqref{a1a}-\eqref{a1c} in the whole space $\mathbb{R}^d$ $ (d=2$ or $3$) with general initial data. In fact, when the
viscosities (including the shear viscosity coefficient and the
magnetic diffusion coefficient) also go to zero, we lose the spatial
compactness property of the velocity and the magnetic field, and the
arguments in \cite{HW3} can not work in this case. In order to
surmount this difficulty, we shall carefully use the energy
arguments. Precisely, we shall describe the oscillations caused by the  general initial data
  and include them in the energy estimates. Some ideas of this
type were introduced by Schochet \cite{S94} and extended to the case
of vanishing viscosity coefficients in \cite{M00,M01b}. To begin our
argument, we first give some formal analysis. Formally, by utilizing
the identity
$$
\nabla(|\tilde{\H}|^2)=2(\tilde{\H}\cdot \nabla
)\tilde{\H}+2\tilde{\H}\times \cu \tilde{\H},
$$
we can rewrite the momentum equation \eqref{a1b}
  as
\begin{equation}\label{a2a}
\partial_t (\tilde{\rho}\tilde{\u})+\dv(\tilde{\rho}\tilde{\u}\otimes\tilde{\u})
+\nabla \tilde{P} =(\tilde{\H}\cdot \nabla
  )\tilde{\H}-\frac12\nabla(|\tilde{\H}|^2)+\tilde{\mu}\Delta\tilde{\u}
  +(\tilde{\mu}+\tilde{\lambda})\nabla(\dv\tilde{\u}).
\end{equation}
By the identities
$$
\cu\cu \tilde{\H}=  \nabla\,\dv \tilde{\H}-\Delta \tilde{\H}$$ and
\begin{equation*}
 \cu( \tilde{ \u}\times\tilde{\H}) =
 \tilde{\u} (\dv \tilde{\H})  -  \tilde{\H}  (\dv\tilde{\u})
 + (\tilde{\H}\cdot \nabla)\tilde{\u} - (\tilde{\u}\cdot \nabla)\tilde{\H},
\end{equation*}
together with the constraint $\dv\tilde{\H}=0$,  the  magnetic field equation
\eqref{a1c} can be   expressed as
\begin{equation}\label{a2b}
 \partial_t\tilde{\H}   +
 (\dv \tilde{\u})\tilde{ \H}+ (\tilde{\u} \cdot \nabla)\tilde{\H}
 - (\tilde{\H}\cdot \nabla)\tilde{\u}= \tilde{\nu }  \Delta \tilde{\H}.
\end{equation}
 We  introduce the  scaling
\begin{equation*}
  \tilde{\rho} (x,  t  )=\rho^\epsilon (x, \epsilon t), \quad
 \tilde{\u} (x, t )=\epsilon \u^\epsilon(x,\epsilon t), \quad
  \tilde{\H} (x, t )=\epsilon \H^\epsilon(x,\epsilon t)
 \end{equation*}
and  assume that the viscosity coefficients $\tilde{\mu}$,
$\tilde{\lambda}$ and
 $\tilde{\nu}$ are  small constants and scaled like
\begin{equation}\label{pa}
  \tilde{\mu}=\epsilon \mu^\epsilon, \quad \tilde{\lambda}=\epsilon \lambda^\epsilon,
  \quad \tilde{\nu}=\epsilon \nu^\epsilon,
\end{equation}
where $\epsilon\in (0,1)$ is a small parameter and the normalized
coefficients $\mu^\epsilon$, $\lambda^\epsilon$   and $\nu^\epsilon$
satisfy  $\mu^\epsilon>0$, $2\mu^\epsilon+d\lambda^\epsilon\geq 0$,
and $\nu^\epsilon>0$.
 With the preceding scalings and using   the pressure  function \eqref{aad},  the compressible
MHD equations   \eqref{a1a}, \eqref{a2a} and \eqref{a2b} take the
form
\begin{align}
 & \partial_t {\rho}^\epsilon
 +\text{div}({\rho}^\epsilon{\u}^\epsilon)=0,  \label{a2i}\\
&  \partial_t
 ({\rho}^\epsilon{\u}^\epsilon)+\text{div}({\rho}^\epsilon{\u}^\epsilon\otimes
 {\u}^\epsilon)+\frac{a \nabla (\rho^\epsilon)^\gamma
 }{\epsilon^2}
  = ({\H}^\epsilon\cdot \nabla
  ){\H}^\epsilon-\frac12\nabla(|{\H}^\epsilon|^2)\nonumber\\
  &\qquad \qquad\qquad\qquad\qquad\qquad\qquad\qquad\quad
   +{\mu}^\epsilon\Delta{\u}^\epsilon+({\mu}^\epsilon+{\lambda}^\epsilon)
   \nabla(\dv {\u}^\epsilon),   \label{a2j}\\
 &  \partial_t {\H}^\epsilon   +
 (\dv  {\u}^\epsilon) { \H}^\epsilon+ ( {\u}^\epsilon \cdot \nabla) {\H}^\epsilon
 - ( {\H}^\epsilon\cdot \nabla) {\u}^\epsilon=  {\nu }^\epsilon
 \Delta  {\H}^\epsilon,\ \ \dv {\H}^\epsilon=0.\label{a2k}
 \end{align}
Formally if we let $\epsilon \rightarrow 0$, we obtain from the
momentum equation \eqref{a2j} that $\rho^\epsilon $ converges to
some function  $\bar \rho(t)\geq 0$ . If we further assume that the
initial datum $\rho^\epsilon_0$
 is of order $1+O(\epsilon)$ (this can be guaranteed by the
initial energy bound \eqref{bd} below), then we can expect that
$\bar\rho=1$. If the limits  ${\u}^\epsilon\rightarrow \u$ and
${\H}^\epsilon\rightarrow \H$ exist, then the continuity equation
\eqref{a2i} gives ${\rm div}\,{\u}=0$. Assuming  that
\begin{equation}\label{pam}
  \mu^\epsilon\rightarrow 0,    \quad   \nu^\epsilon \rightarrow 0 \quad \text{as}\quad
\epsilon \rightarrow 0,
\end{equation}
we obtain the following ideal incompressible MHD equations
 \begin{align}
& \partial_t \u+(\u\cdot \nabla)\u -({\H} \cdot \nabla
  ){\H}+\nabla p +\frac12\nabla(|{\H} |^2) =0, \label{a2l}\\
& \partial_t {\H}    + ( {\u}  \cdot \nabla) {\H}
 - ( {\H} \cdot \nabla) {\u} =  0, \label{a2m}\\
  &\dv \u=0, \quad   \dv \H=0.\label{a2n}
 \end{align}

In the present paper we will  prove rigorously   that a weak
solution of the compressible MHD
equations \eqref{a2i}-\eqref{a2k} with general initial data
 converges to, as  the  small parameter $\epsilon$
goes to $0$, the strong solution of the ideal incompressible MHD
equations \eqref{a2l}-\eqref{a2n} in the time interval where
the strong solution of \eqref{a2l}-\eqref{a2n}
exists.

\smallskip
  Before ending the introduction, we give the notations used throughout the present
  paper. We denote the space $L^q_2(\mathbb{R}^d)\,(q \geq 1)$ by
$$
L^q_2(\mathbb{R}^d)= \{f\in L_{loc}(\mathbb{R}^d):   f  1_{\{|f|\leq
1/2\}}\in L^2, f  1_{\{|f|\geq 1/2\}}\in L^q\} .
$$
The letters $C$ and $C_T$ denote various positive constants
independent of $\epsilon$, but $C_T$ may depend on $T$. For
convenience, we denote by $W^{s,r}\equiv W^{s,r}(\mathbb{R}^d)$ the
standard Sobolev space. In particular, if $r=2$ we denote
$W^{s,r}\equiv H^s$. For any vector field $\mathbf{v}$, we denote by
$P\mathbf{v}$ and $Q\mathbf{v}$, respectively, the divergence-free
part of $\mathbf{v}$ and the gradient part of $\mathbf{v}$, namely,
$Q\mathbf{v}=\nabla \Delta^{-1}(\text{div}\mathbf{v})$ and
$P\mathbf{v}=\mathbf{v}-Q\mathbf{v}$.

  We state our main results in Section 2 and present the proofs in Section 3.

\section{main results}

We first recall the local existence of strong solution to the ideal
incompressible MHD equations \eqref{a2l}-\eqref{a2n}. The proof can
be found in \cite{DL}.

\begin{prop}[\!\cite{DL}]\label{imhd}
Assume that the initial data $(\u(x,t),\H(x,t))|_{t=0} = (\u_0(x),
\linebreak  \H_0(x))$ satisfy $\u_0, \H_0\in
 {H}^s $, $s>d/2+1$, and $\dv \u_0=0, \dv \H_0=0$. Then, there exist a
 $T^*\in (0,\infty)$ and a unique solution
 $(\u,\H)\in L^{\infty}([0,T^*),{H}^s)$ to the ideal
incompressible MHD equations \eqref{a2l}-\eqref{a2n}
satisfying, for any $0<T<T^*$, $\dv \u =0$, $\dv\H =0$, and
\begin{equation}\label{ba}
\sup_{0\le t\le T}\!\big\{\|(\u,\H)(t)\|_{H^s}
+\|(\partial_t\u,\partial_t\H)(t)\|_{H^{s-1}}\big\} \le C_T.
\end{equation}
\end{prop}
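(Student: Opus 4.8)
The plan is to treat the ideal incompressible MHD system \eqref{a2l}--\eqref{a2n} as a quasilinear problem of hyperbolic type and run the classical energy method. First I would eliminate the pressure: applying the Leray projector $P$ to \eqref{a2l} and using $\dv\u=0$ reduces \eqref{a2l}--\eqref{a2n} to the closed system
\begin{equation*}
\partial_t\u+P\big[(\u\cdot\nabla)\u-(\H\cdot\nabla)\H\big]=0,\qquad
\partial_t\H+(\u\cdot\nabla)\H-(\H\cdot\nabla)\u=0,
\end{equation*}
the scalar $p$ being recovered afterwards from the elliptic relation $-\Delta\big(p+\tfrac12|\H|^2\big)=\sum_{i,j}\partial_i\partial_j\big(u_iu_j-H_iH_j\big)$. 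It is then convenient to pass to the Els\"asser variables $\mathbf{z}^{\pm}=\u\pm\H$, which are divergence free and satisfy the symmetric pair $\partial_t\mathbf{z}^{\pm}+P\big[(\mathbf{z}^{\mp}\cdot\nabla)\mathbf{z}^{\pm}\big]=0$; here $\dv\u=0$ is built into the projected equations, while $\dv\H=0$ propagates because $\dv\H$ obeys the linear transport equation $\partial_t(\dv\H)+(\u\cdot\nabla)(\dv\H)=0$.

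Next I would establish the a priori estimate in $H^s$. For $|\alpha|\le s$, apply $D^{\alpha}$ to the $\mathbf{z}^{\pm}$-equation, take the $L^2$ inner product with $D^{\alpha}\mathbf{z}^{\pm}$, and add. The leading transport term gives $\tfrac12\int(\mathbf{z}^{\mp}\cdot\nabla)|D^{\alpha}\mathbf{z}^{\pm}|^2\,dx=0$ since $\dv\mathbf{z}^{\mp}=0$, and the projected-gradient term vanishes because $P$ is self-adjoint and $\dv D^{\alpha}\mathbf{z}^{\pm}=0$; what survives is the commutator $\big[D^{\alpha},\,\mathbf{z}^{\mp}\cdot\nabla\big]\mathbf{z}^{\pm}$, which by the Kato--Ponce/Moser estimates is bounded in $L^2$ by $C\big(\|\nabla\mathbf{z}^{\mp}\|_{L^{\infty}}\|\mathbf{z}^{\pm}\|_{H^s}+\|\mathbf{z}^{\mp}\|_{H^s}\|\nabla\mathbf{z}^{\pm}\|_{L^{\infty}}\big)$. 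This is exactly where the hypothesis $s>d/2+1$ enters, through the embedding $H^{s-1}\hookrightarrow L^{\infty}$ which controls $\|\nabla\mathbf{z}^{\pm}\|_{L^\infty}$ by $\|\mathbf{z}^{\pm}\|_{H^s}$. Writing $E(t)=\|\mathbf{z}^{+}(t)\|_{H^s}^2+\|\mathbf{z}^{-}(t)\|_{H^s}^2$, which is comparable to $\|\u(t)\|_{H^s}^2+\|\H(t)\|_{H^s}^2$, we arrive at $\tfrac{d}{dt}E\le C\,E^{3/2}$, hence a bound $E(t)\le C_T$ on a maximal interval $[0,T^{*})$ whose length depends only on $\|\u_0\|_{H^s}+\|\H_0\|_{H^s}$.

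To actually construct the solution I would set up an iteration: given a divergence-free $\mathbf{z}^{\mp}_n\in L^{\infty}([0,T],H^s)$, let $\mathbf{z}^{\pm}_{n+1}$ solve the \emph{linear} equations $\partial_t\mathbf{z}^{\pm}_{n+1}+P\big[(\mathbf{z}^{\mp}_n\cdot\nabla)\mathbf{z}^{\pm}_{n+1}\big]=0$ with data $\u_0\pm\H_0$ (one may equivalently use a Friedrichs mollification of the nonlinearity or a vanishing-viscosity parabolic regularization). The energy computation above, run uniformly in $n$, furnishes bounds in $L^{\infty}([0,T],H^s)$ on a time interval independent of $n$, while the analogous estimate for the differences $\mathbf{z}^{\pm}_{n+1}-\mathbf{z}^{\pm}_{n}$ in the \emph{lower} norm $L^2$ (or $H^{s-1}$) shows the map is a contraction there; passing to the limit and interpolating yields $(\u,\H)\in L^{\infty}([0,T],H^s)\cap C([0,T],H^{s'})$ for $s'<s$, solving the system with $\dv\u=\dv\H=0$. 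Uniqueness follows from the same $L^2$ difference estimate and Gronwall. Finally, \eqref{ba} for the time derivatives is read directly off the equations, $\partial_t\u=-P\big[(\u\cdot\nabla)\u-(\H\cdot\nabla)\H\big]$ and $\partial_t\H=(\H\cdot\nabla)\u-(\u\cdot\nabla)\H$: since $H^{s-1}$ is a Banach algebra (again $s-1>d/2$) and $P$ is bounded on it, $\|(\partial_t\u,\partial_t\H)(t)\|_{H^{s-1}}\le C\big(\|\u(t)\|_{H^s}^2+\|\H(t)\|_{H^s}^2\big)\le C_T$.

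The step I expect to be the main obstacle is the construction rather than the a priori estimate: one must show the approximating solutions live on a time interval that does not shrink when the regularization is removed, and then upgrade the low-norm convergence to the stated $H^s$ conclusion while handling the Leray projector carefully so that both divergence constraints survive the limit; obtaining continuity in time at the top regularity $H^s$ (not needed for \eqref{ba}, which only asks for $L^{\infty}$) would require an additional Bona--Smith-type argument.
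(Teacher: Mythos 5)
Your argument is correct, and it is worth noting that the paper itself gives no proof of this proposition: it is quoted as a known result with a pointer to \cite{DL}, where the local strong solution of the ideal system is obtained in the course of studying the inviscid, non-resistive limit of the viscous incompressible MHD equations. Your route --- Leray projection to eliminate $\nabla p+\tfrac12\nabla(|\H|^2)$, passage to the Els\"asser variables $\mathbf{z}^{\pm}=\u\pm\H$ so that the nonlinearity becomes a pair of mutually transported divergence-free fields, Kato--Ponce commutator estimates closing a differential inequality $\tfrac{d}{dt}E\le CE^{3/2}$ under $s>d/2+1$, and a linear iteration with contraction in the low norm --- is the standard direct construction and delivers exactly the conclusion \eqref{ba}, including the $H^{s-1}$ bound on $(\partial_t\u,\partial_t\H)$ read off from the equations via the algebra property of $H^{s-1}$. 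The only points I would flag are cosmetic: the propagation of $\dv\H=0$ is automatic once both equations are projected (so the transport-equation argument for $\dv\H$ is really needed only to check equivalence of the projected system with the original one), and, as you yourself observe, continuity in time in $H^s$ is not claimed in the statement, so the Bona--Smith upgrade can be omitted. What your self-contained construction buys over the paper's citation is independence from the viscous approximation; what the vanishing-viscosity route of \cite{DL} buys is that the ideal solution comes packaged with the convergence statement that reference is actually after.
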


We prescribe the initial conditions to the compressible
MHD equations   \eqref{a2i}-\eqref{a2k} as
\begin{equation}\label{bb}
  \rho^\epsilon|_{t=0}=\rho^\epsilon_0(x), \quad \rho^\epsilon
 \u^\epsilon|_{t=0}=\rho^\epsilon_0(x)\u^\epsilon_0(x)\equiv \mathbf{m}^\epsilon_0(x),
 \quad \H^\epsilon|_{t=0}=\H^\epsilon_0(x),
   \end{equation}
and assume that
\begin{equation}\label{bc}
 \rho_0^\epsilon \geq 0,\,\, \rho^\epsilon_0-1\in L^\gamma_2,\,\,
 \rho^\epsilon_0|\u^\epsilon_0|^2\in L^1,\,\,
 \mathbf{m}^\epsilon_0=0\,\, \text{for a.e.}\,\,
  {\rho^\epsilon_0=0} ,
\end{equation} and
\begin{equation}\label{bch}
\H^\epsilon_0\in L^2,\;\; \dv\H^\epsilon_0=0.
\end{equation}
 Moreover,  we assume that the initial data also satisfy the following uniform bound
  \begin{equation}\label{bd}
     \int_{\mathbb{R}^d}
 \Big[\frac12\rho^\epsilon_0|\u^\epsilon_0|^2+\frac12|\H^\epsilon_0|^2
 +\frac{a}{\epsilon^2{(\gamma-1)}}\big((\rho^\epsilon_0)^\gamma -1-\gamma
 (\rho^\epsilon_0-1)\big)
    \Big] dx \leq C.
  \end{equation}

Under the above assumptions, it was pointed out in \cite{HW1} that
the Cauchy problem of the compressible MHD
equations \eqref{a2i}-\eqref{a2k} has a   global   weak solution,
which can be stated as follows (see also \cite{HW3}).

\begin{prop}[\cite{HW1,HW3}]\label{cmhd}
 Let $\gamma>d/2$. Supposing that the initial data
 $(\rho^\epsilon_0,\mathbf{m}^\epsilon_0,\H^\epsilon_0 )$ satisfy the assumptions \eqref{bc}-\eqref{bd},
 then the compressible MHD equations
 \eqref{a2i}-\eqref{a2k} with  the initial data \eqref{bb} enjoy
  at least one global weak solution $(\rho^\epsilon,  \u^\epsilon, \H^\epsilon)$
 satisfying
\begin{enumerate}
  \item  $ \rho^\epsilon-1\in L^\infty(0,\infty;L^\gamma_2)\cap
 C([0,\infty),L^r_2)$ for all $1\leq r< \gamma$, $\rho^\epsilon
 |\u^\epsilon|^2\in
  L^\infty(0,\infty; L^1)$, $\H^\epsilon
\in   L^\infty(0,\infty; L^2)$, $\u^\epsilon\in L^2(0,T;H^1),$
  $\rho^\epsilon\u^\epsilon\in C([0,T],  L^{\frac{2\gamma}{\gamma+1}}_{weak})$,
  and  $\H^\epsilon \in  L^2(0,T;H^1)\cap C([0,T],L^{\frac{2\gamma}{\gamma+1}}_{weak})$
   for all $T\in (0,\infty)$;

  \item  the energy inequality
  \begin{equation}\label{be}
   \mathcal{E}^\epsilon(t)+\int^t_0  \mathcal{D}^\epsilon(s)ds\leq
 \mathcal{E}^\epsilon(0)
  \end{equation}
  holds with the finite total energy
  \begin{equation}\label{bf}
 \qquad\qquad  \mathcal{E}^\epsilon(t)\equiv \int_{\mathbb{R}^d}
 \bigg[\frac12\rho^\epsilon|\u^\epsilon|^2+\frac12|\H^\epsilon|^2
        +\frac{a}{\epsilon^2{(\gamma-1)}}\big((\rho^\epsilon)^\gamma
 -1-\gamma (\rho^\epsilon-1)\big)
    \bigg](t)
   \end{equation}
   and the dissipation energy
\begin{equation}\label{bff}
\mathcal{D}^\epsilon(t)\equiv \int_{\mathbb{R}^d}\big[ \mu^\epsilon|\nabla
 \u^\epsilon|^2
 +(\mu^\epsilon+\lambda^\epsilon) |{\rm div}\u^\epsilon|^2 + \nu^\epsilon |\nabla
 \H^\epsilon|^2\big](t);
\end{equation}

  \item the continuity equation is satisfied in the sense of
 renormalized solutions, i.e.,
  \begin{equation} \label{cnsbg}
    \partial_t b(\rho^\epsilon)+{ \rm div}
 (b(\rho^\epsilon)\u^\epsilon)+\big(b'(\rho^\epsilon)\rho^\epsilon-b(\rho^\epsilon)\big){\rm div}
 \u^\epsilon =0
  \end{equation}
  for any $b\in C^1(\mathbb{R})$ such that $b'(z)$ is a constant for
 $z$ large enough;
  \item the equations \eqref{a2i}-\eqref{a2k} hold in
 $\mathcal{D}'( \mathbb{R}^d\times(0,\infty))$.
\end{enumerate}
\end{prop}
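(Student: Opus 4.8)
The plan is to regard Proposition~\ref{cmhd}, in which $\epsilon$ is \emph{fixed} so that $a/\epsilon^2,\mu^\epsilon,\lambda^\epsilon,\nu^\epsilon$ are merely constants with $\mu^\epsilon>0$, $2\mu^\epsilon+d\lambda^\epsilon\geq0$, $\nu^\epsilon>0$, as the extension to the MHD system~\eqref{a2i}--\eqref{a2k} of the Lions--Feireisl theory of global weak solutions of the barotropic compressible Navier--Stokes equations, following \cite{HW1,HW3}. The solution is built by a multi-level approximation: a Faedo--Galerkin discretisation of $\u^\epsilon$ and $\H^\epsilon$, an artificial pressure $\delta(\rho^\epsilon)^\beta$ with $\beta$ large added to $(a/\epsilon^2)(\rho^\epsilon)^\gamma$ to gain integrability, and an artificial viscosity $\eta\Delta\rho^\epsilon$ inserted into~\eqref{a2i}, which then becomes a quasilinear parabolic equation for $\rho^\epsilon$ driven by the (smooth, finite-dimensional) velocity. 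Since $\nu^\epsilon>0$, equation~\eqref{a2k} is for given $\u^\epsilon$ a linear uniformly parabolic system for $\H^\epsilon$, solved alongside the momentum equation at each level; applying $\dv$ to~\eqref{a2k} gives a homogeneous linear parabolic equation for $\dv\H^\epsilon$ with zero initial datum, so the constraint $\dv\H^\epsilon=0$ is propagated.

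The first key step is the \emph{a priori energy estimate}. Testing~\eqref{a2j} with $\u^\epsilon$, testing~\eqref{a2k} with $\H^\epsilon$, adding, and using~\eqref{a2i} to turn the pressure work into the time derivative of the relative potential energy $\tfrac{a}{\epsilon^2(\gamma-1)}\big((\rho^\epsilon)^\gamma-1-\gamma(\rho^\epsilon-1)\big)$, one checks that the magnetic coupling terms cancel: an integration by parts using $\dv\H^\epsilon=0$ gives $\int(\H^\epsilon\cdot\nabla)\H^\epsilon\cdot\u^\epsilon+\int(\H^\epsilon\cdot\nabla)\u^\epsilon\cdot\H^\epsilon=0$, while the term $-\tfrac12\int\nabla(|\H^\epsilon|^2)\cdot\u^\epsilon=\tfrac12\int(\dv\u^\epsilon)|\H^\epsilon|^2$ from the velocity identity cancels, in the sum, the term $\tfrac12\int(\dv\u^\epsilon)|\H^\epsilon|^2$ surviving from testing $(\dv\u^\epsilon)\H^\epsilon$ and $(\u^\epsilon\cdot\nabla)\H^\epsilon$ against $\H^\epsilon$. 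This produces the identity behind~\eqref{be}--\eqref{bff} at the approximate level and, with the uniform bound~\eqref{bd} on the data, the uniform estimates of part~(1): $\sqrt{\rho^\epsilon}\,\u^\epsilon$ bounded in $L^\infty(0,T;L^2)$, $\H^\epsilon$ in $L^\infty(0,T;L^2)\cap L^2(0,T;H^1)$, $\rho^\epsilon-1$ in $L^\infty(0,T;L^\gamma_2)$ and $\nabla\u^\epsilon$ in $L^2(0,T;L^2)$; the weak time-continuity statements then follow from the equations.

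The heart of the matter is passing to the limit in the successive parameters ($n\to\infty$, then $\eta\to0$, then $\delta\to0$) in the nonlinear terms, which needs strong compactness of the magnetic field and of the density. For $\H^\epsilon$ this is soft: the bound in $L^2(0,T;H^1)$ together with a bound on $\partial_t\H^\epsilon$ in a negative-order Sobolev space (read off from~\eqref{a2k}, which can be written $\partial_t\H^\epsilon=\nu^\epsilon\Delta\H^\epsilon+\dv(\u^\epsilon\otimes\H^\epsilon-\H^\epsilon\otimes\u^\epsilon)$) yields, via Aubin--Lions, strong convergence of the approximations in $L^2_{\rm loc}(\mathbb R^d\times(0,T))$; then every magnetic nonlinearity in~\eqref{a2j}--\eqref{a2k} pairs this strongly convergent factor with a weakly convergent one ($\u^\epsilon$ bounded in $L^2(0,T;H^1)$) and passes to the limit, identifying in particular the Lorentz force. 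Strong convergence of the \emph{density} is the principal obstacle. One invokes the two pillars of the Lions--Feireisl method: (i) the \emph{effective viscous flux} identity --- that $(a/\epsilon^2)(\rho^\epsilon)^\gamma-(2\mu^\epsilon+\lambda^\epsilon)\dv\u^\epsilon$ enjoys improved weak continuity, proved by testing the momentum equation with $\varphi\,\nabla\Delta^{-1}[b(\rho^\epsilon)]$ and using the div--curl structure, the magnetic terms entering only as already-identified data --- and (ii) control of the amplitude of density oscillations via renormalized continuity equations. Since the hypothesis is only $\gamma>d/2$ --- for $d=3$ this lies below the Lions threshold $\gamma\geq9/5$ --- one must use Feireisl's refinement: the oscillation defect measure is finite, the renormalized continuity equation passes to the limit, and the weak-limit defect of $\rho^\epsilon\ln\rho^\epsilon$ is nonincreasing in time and zero initially, hence vanishes, forcing a.e.\ convergence of the approximate densities to $\rho^\epsilon$. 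With both $\rho^\epsilon$ and $\H^\epsilon$ converging strongly one passes to the limit in $(\rho^\epsilon)^\gamma$ and in $\rho^\epsilon\u^\epsilon\otimes\u^\epsilon$, derives the renormalized continuity equation~\eqref{cnsbg} from the DiPerna--Lions lemma (part~(3)), recovers~\eqref{a2i}--\eqref{a2k} in $\mathcal D'(\mathbb R^d\times(0,\infty))$ (part~(4)), and obtains~\eqref{be} by weak lower semicontinuity. The single hardest point is thus the density compactness at the low adiabatic exponents $\gamma>d/2$, exactly as in~\cite{HW1}.
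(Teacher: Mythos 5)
This proposition is quoted by the paper from \cite{HW1,HW3} without proof, and your outline is a faithful and correct summary of the argument in those references: the Faedo--Galerkin/artificial viscosity/artificial pressure scheme, the cancellation of the magnetic coupling terms in the energy identity, Aubin--Lions compactness for $\H^\epsilon$ from the $L^2(0,T;H^1)$ bound, and Feireisl's effective-viscous-flux and oscillation-defect-measure machinery to get strong density convergence down to $\gamma>d/2$. No discrepancy with the paper, which offers only the citation.
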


The initial energy inequality \eqref{bd} implies that
$\rho^\epsilon_0$ is of order $1+O(\epsilon)$. We write
$\rho^\epsilon=1+\epsilon \varphi^\epsilon$ and denote
\begin{equation*}
  \Pi^\epsilon(x,t)=\frac{1}{\epsilon}\sqrt{\frac{2a}{\gamma-1}
  \big((\rho^\epsilon)^\gamma-1-\gamma(\rho^\epsilon-1)\big)}.
\end{equation*}
We will use the above approximation $\Pi^\epsilon(x,t)$ instead of
$\varphi^\epsilon$, since we can not obtain any bound for
$\varphi^\epsilon$ in $L^\infty(0,T;L^2)$ directly if $\gamma <2$.

 We also need to impose the following conditions  on the solution $(
\rho^\epsilon,\u^\epsilon,\H^\epsilon)$ at infinity
$$
 \rho^\epsilon \rightarrow 1 ,\quad
 \u^\epsilon \rightarrow \mathbf{0}, \ \ \H^\epsilon \rightarrow \mathbf{0} \quad \text{as}
 \quad |x|\rightarrow +\infty.
$$

 The main results of this paper can be stated as follows.

\begin{thm}\label{MRa} Let $s>{d}/{2}+2$ and the conditions in Proposition \ref{cmhd} hold.
Assume that the shear viscosity $\mu^\epsilon$
and the magnetic diffusion coefficient $\nu^\epsilon $ satisfy
\begin{align}\label{abc}
  \mu^\epsilon =\epsilon^\alpha,\quad \nu^\epsilon=\epsilon^\beta
\end{align}
for some constants $\alpha , \beta>0$ satisfying $0<\alpha+\beta
<2$.
  Moreover we
 assume that
$\sqrt{\rho^\epsilon_0}\u^\epsilon_0$ converges strongly in $L^2$ to
some $\tilde{\u}_0$, $\H^\epsilon_0$ converges strongly in $L^2$ to
some $\H_0$, and $\Pi^\epsilon_0$ converges strongly in $L^2$ to
some $\varphi_0$. Let $(\u,\H)$ be the smooth solution to the ideal
incompressible MHD equations \eqref{a2l}-\eqref{a2n}
defined on $[0,T^*)$ with $(\u,\H)|_{t=0}=(\u_0,\H_0), \u_0=P\tilde{
\u}_0$. Then, for any $0<T<T^*$, the global weak solution
$(\rho^\epsilon, \u^\epsilon, \H^\epsilon)$ of the compressible
MHD equations \eqref{a2i}-\eqref{a2k} established in
Proposition \ref{cmhd} satisfies
\begin{enumerate}
          \item  $\rho^\epsilon$ converges strongly to $1$ in
 $C([0,T],L^\gamma_2(\mathbb{R}^d))$;
           \item  $\H^\epsilon$ converges strongly to $  \H$
  in $L^\infty (0,T;L^2(\mathbb{R}^d))$;
          \item $P(\sqrt{\rho^\epsilon}\u^\epsilon)$ converges  strongly
 to $\u$ in $L^\infty (0,T;L^2(\mathbb{R}^d))$;
           \item $\sqrt{\rho^\epsilon}\u^\epsilon$ converges strongly to
 $\u$ in $L^r(0,T; L^2_{\mathrm{loc}}(\mathbb{R}^d))$ for all $1\leq r<+\infty$.
          \end{enumerate}
\end{thm}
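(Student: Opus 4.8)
The plan is to use a modulated energy functional that measures the distance between the weak solution $(\rho^\epsilon,\u^\epsilon,\H^\epsilon)$ of the compressible system and the smooth solution $(\u,\H)$ of the ideal incompressible system, together with the acoustic/potential part of the velocity. Concretely, I would set
\[
\mathcal{H}^\epsilon(t)=\int_{\mathbb{R}^d}\Big[\tfrac12\rho^\epsilon|\u^\epsilon-\u|^2+\tfrac12|\H^\epsilon-\H|^2+\tfrac12|\Pi^\epsilon|^2\Big]\,dx,
\]
and try to derive a Gronwall-type inequality
\[
\mathcal{H}^\epsilon(t)\le \mathcal{H}^\epsilon(0)+C_T\!\int_0^t\!\mathcal{H}^\epsilon(s)\,ds + \delta^\epsilon(t),
\]
where $\delta^\epsilon(t)\to0$ as $\epsilon\to0$ uniformly on $[0,T]$. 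Since the hypotheses on the initial data give $\mathcal{H}^\epsilon(0)\to0$, this yields $\sup_{[0,T]}\mathcal{H}^\epsilon(t)\to0$, from which conclusions (1)--(3) follow (conclusion (1) via the coercivity of the pressure potential encoded in $\Pi^\epsilon$ and the equivalence with the $L^\gamma_2$ norm, conclusion (3) after noting $P(\sqrt{\rho^\epsilon}\u^\epsilon)\to P(\sqrt{\rho^\epsilon}\u)\to\u$ once $\rho^\epsilon\to1$). Conclusion (4) then comes from an Aubin--Lions/local compactness argument combined with the already-established strong convergence of the divergence-free part.

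\smallskip

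The main computational step is to expand $\frac{d}{dt}\mathcal{H}^\epsilon(t)$ using the weak formulation of \eqref{a2i}--\eqref{a2k} (which is legitimate here because $(\u,\H)$ is smooth, so it is an admissible test function after the usual regularization) together with the energy inequality \eqref{be}. The quadratic "good" terms combine into $C_T\mathcal{H}^\epsilon$ after using the equations \eqref{a2l}--\eqref{a2n} satisfied by $(\u,\H)$ and integration by parts; the key cancellations are between the convective terms $\dv(\rho^\epsilon\u^\epsilon\otimes\u^\epsilon)$ in the momentum equation and the corresponding terms in $\partial_t\u+(\u\cdot\nabla)\u$, and similarly between the induction-equation terms $(\u^\epsilon\cdot\nabla)\H^\epsilon-(\H^\epsilon\cdot\nabla)\u^\epsilon$ and $(\u\cdot\nabla)\H-(\H\cdot\nabla)\u$; the Lorentz-force/magnetic-pressure coupling $(\H^\epsilon\cdot\nabla)\H^\epsilon-\tfrac12\nabla|\H^\epsilon|^2$ against $(\H\cdot\nabla)\H-\tfrac12\nabla|\H|^2$ also pairs up. The viscous terms $\mu^\epsilon\Delta\u^\epsilon$, $(\mu^\epsilon+\lambda^\epsilon)\nabla\dv\u^\epsilon$, $\nu^\epsilon\Delta\H^\epsilon$ produce (after integration by parts against the \emph{smooth} $\u,\H$) terms of the form $\mu^\epsilon\|\nabla\u\|_{L^2}\|\nabla\u^\epsilon\|_{L^2}$ etc.; here I would use the dissipation bound $\int_0^t\mathcal{D}^\epsilon\le C$ from \eqref{be} together with $\mu^\epsilon=\epsilon^\alpha$, $\nu^\epsilon=\epsilon^\beta$ to bound these by $C\sqrt{\mu^\epsilon}+C\sqrt{\nu^\epsilon}\to0$, and the self-dissipation terms $-\mu^\epsilon\|\nabla\u^\epsilon\|^2$ etc. have a favorable sign and may simply be discarded.

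\smallskip

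The genuinely hard part is controlling the \textbf{acoustic oscillations}: the terms coupling $\Pi^\epsilon$ (equivalently the density fluctuation) to the potential part $Q(\sqrt{\rho^\epsilon}\u^\epsilon)$ of the velocity. These arise because $\u^\epsilon$ is \emph{not} divergence-free, only $\u$ is; the cross terms $\int \rho^\epsilon\u^\epsilon\cdot\u$ and the pressure term $\frac{a}{\epsilon^2}\int\nabla(\rho^\epsilon)^\gamma\cdot(\u^\epsilon-\u)$ generate contributions that are formally $O(1/\epsilon)$ and oscillate in time with frequency $O(1/\epsilon)$. The strategy, following Schochet \cite{S94} and Masmoudi \cite{M00,M01b}, is that on the whole space $\mathbb{R}^d$ these acoustic waves \emph{disperse} to infinity: one uses Strichartz-type / local-energy-decay estimates for the wave (Klein--Gordon-like) operator governing $(\Pi^\epsilon, Q\u^\epsilon)$ to show that the oscillatory integrals converge to zero after integration in time, i.e. they contribute to $\delta^\epsilon(t)$. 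This is precisely where the whole-space setting is essential and where the argument of \cite{JJL} in the torus breaks down without magnetic diffusion. Technically I would extract from the momentum and continuity equations the linear acoustic system $\epsilon\partial_t\Pi^\epsilon+\gamma\,\dv(\text{something})\approx0$, $\epsilon\partial_t Q\u^\epsilon+\nabla\Pi^\epsilon\approx\epsilon(\cdots)$, establish uniform-in-$\epsilon$ bounds on the sources, and invoke the dispersive decay to pass to the limit; the interaction of these oscillations with the magnetic field must also be shown to vanish, which on $\mathbb{R}^d$ follows because $\H^\epsilon$ enjoys (weak) local compactness in space while the oscillations carry no local-in-space mass in the limit. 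Once $\delta^\epsilon\to0$ is secured, Gronwall closes the estimate and all four conclusions follow.
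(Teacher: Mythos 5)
There is a genuine gap in your set-up. For general (ill-prepared) data your functional $\mathcal{H}^\epsilon$ does \emph{not} vanish initially: by hypothesis $\sqrt{\rho^\epsilon_0}\u^\epsilon_0\to\tilde{\u}_0$ and $\Pi^\epsilon_0\to\varphi_0$ in $L^2$, while $\u_0=P\tilde{\u}_0$, so $\mathcal{H}^\epsilon(0)\to\frac12\|Q\tilde{\u}_0\|_{L^2}^2+\frac12\|\varphi_0\|_{L^2}^2$, which is nonzero unless the data are well prepared. Nor can any Gronwall inequality force $\mathcal{H}^\epsilon(t)\to0$: the acoustic energy is asymptotically conserved --- the group $e^{\tau L}$ generated by $(\phi,\mathbf{v})\mapsto(-\dv\mathbf{v},-\nabla\phi)$ is an isometry on $L^2$ --- so the oscillations disperse only \emph{locally} in space, never in $L^\infty(0,T;L^2(\mathbb{R}^d))$; this is precisely why conclusions (3) and (4) involve only $P(\sqrt{\rho^\epsilon}\u^\epsilon)$ globally and $\sqrt{\rho^\epsilon}\u^\epsilon$ only in $L^2_{\mathrm{loc}}$. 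Treating the oscillations as a source $\delta^\epsilon(t)$ that ``integrates to zero in time'' cannot repair this, because the defect is already present at $t=0$. The correct device (used in the paper, following Schochet and Masmoudi) is to put the acoustic correctors \emph{inside} the quadratic form: solve the linear wave system \eqref{ca}--\eqref{cb} with mollified data $\Pi^{\epsilon,\delta}_0$, $Q^\delta(\sqrt{\rho^\epsilon_0}\u^\epsilon_0)$ and modulate by $|\sqrt{\rho^\epsilon}\u^\epsilon-\u-\g^{\epsilon,\delta}|^2+|\H^\epsilon-\H|^2+|\Pi^\epsilon-\phi^{\epsilon,\delta}|^2$. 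Then the initial value does tend to $0$ (after $\delta\to0$), the singular $O(1/\epsilon)$ terms cancel algebraically against \eqref{ca}--\eqref{cb}, and it is only the remaining \emph{cross} terms involving $(\phi^{\epsilon,\delta},\g^{\epsilon,\delta})$ that are killed by the Strichartz decay \eqref{cc}--\eqref{cd}.

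A second omission: you never use the hypothesis $0<\alpha+\beta<2$, which is exactly where the magnetic coupling bites and what distinguishes this problem from the pure Navier--Stokes case. The terms pairing the density deviation with $\u^\epsilon$ and $\H^\epsilon$ (the term $S^\epsilon_4$ in the paper) require bounds of the type $\|\sqrt{\rho^\epsilon}-1\|_{L^2}\,\|\u^\epsilon\|_{L^6}\,\|\H^\epsilon\|_{L^3}$; since there is no uniform $L^2$ bound on $\u^\epsilon$ itself, the only available control of $\nabla\u^\epsilon$ and $\nabla\H^\epsilon$ comes from the dissipation in \eqref{be}, at the cost of factors $(\mu^\epsilon)^{-1/2}$ and $(\nu^\epsilon)^{-1/2}$, yielding a net contribution of order $\epsilon^{1-(\alpha+\beta)/2}$ --- this is where the restriction enters, and without this estimate the Gronwall argument does not close. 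Two minor points: conclusion (1) follows directly from the energy bound \eqref{L2g}, not from the modulated energy; and conclusion (4) is not an Aubin--Lions argument (there is no uniform spatial compactness for $\u^\epsilon$, whose gradient bound degenerates like $(\mu^\epsilon)^{-1/2}$) but follows from the strong convergence in (3) combined with the local smallness of $\g^{\epsilon,\delta}$ in $L^l(\mathbb{R},W^{s,r})$ with $r>2$ from \eqref{cd}.
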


The proof of above results is based on  the combination  of  the
modulated energy method, motivated by Brenier \cite{B00}, the
Strichartz's estimate of linear wave equations \cite{DG99} and the
weak convergence method. Masmoudi \cite{M01b} made use of such ideas
to consider  the incompressible, inviscid convergence of weak
solutions of compressible Navier-Stokes equations to the strong
solution of the incompressible Euler equations in the whole space
and the torus. We shall follow and adapt the method used in
\cite{M01b} to prove our results. Besides the difficulties pointed
out in \cite{M01b}, we have to overcome the additional difficulties
caused by the strong coupling of the hydrodynamic motion and  the
magnetic fields. Very careful energy estimates are employed to
estimate those nonlinear terms for large data, see Step 4 in the
proof of Theorem \ref{MRa} below. On the other hand, because the
general initial data are considered in the present paper, the
oscillations appear and propagate with the solution. We will use the
Strichartz's estimate of linear wave equations to deal with such
oscillations and their interactions with velocity and magnetic
fields. Finally, the weak convergence method and refined energy
analysis are employed to obtain the desired convergence results.

\begin{rem}
The assumption that $\Pi^\epsilon_0$ converges strongly in $L^2$ to
some $\varphi_0$ in fact implies that $\varphi^\epsilon_0$ converges
strongly  to $\varphi_0$ in $L^\gamma_2$.
\end{rem}

\begin{rem}
The condition \eqref{abc} is required in our proof to control the
terms caused by  the strong coupling between the hydrodynamic motion
and  the magnetic fields,   see \eqref{rr3} below on the estimates
of $S^\epsilon_4(t)$, where $\rho^\epsilon, \u^\epsilon$ and
$\H^\epsilon$ are involved. Notice that such condition is not needed
in the proof of  the inviscid and incompressible limit for the
compressible Navier-Stokes equations, see \cite{M01b}. Thus it shows
the presence of magnetic effect for our problem.

\end{rem}

\begin{rem}
Compared with the results obtained in \cite{JJL} on the periodic
case where the convergence of compressible MHD equations to partial
viscous MHD equations (shear viscosity coefficient is zero but
magnetic diffusion coefficient is a positive constant)   is
rigorously proved, here we can allow the  magnetic diffusion
coefficient also goes to zero due to the dispersive property of the
acoustic wave in the whole space.
\end{rem}

\begin{rem}
  Our arguments in this paper can be applied, after slight
modifications,
 to the case
 \begin{equation*}
  \mu^\epsilon\rightarrow \mu^0,    \quad   \nu^\epsilon \rightarrow \nu^0 \quad \text{as}\quad
\epsilon \rightarrow 0,
\end{equation*}
 where $\mu^0 $ and $ \nu^0$ are nonnegative constants satisfying
(1) $\mu_0>0, \nu^0=0$, or (2) $\mu_0=0, \nu^0>0$ or  (3) $\mu_0>0,
\nu^0>0$.  Hence we can also obtain the convergence of
  compressible MHD equations \eqref{a2i}-\eqref{a2k} to the incompressible MHD
  equations
\begin{align*}
& \partial_t \u+(\u\cdot \nabla)\u -({\H} \cdot \nabla
  ){\H} -\mu^0 \Delta \u +\nabla p +\frac12\nabla(|{\H} |^2) =0,  \\
& \partial_t {\H}    + ( {\u}  \cdot \nabla) {\H}
 - ( {\H} \cdot \nabla) {\u} -\nu^0 \Delta \H =  0, \\
  &\dv \u=0, \quad   \dv \H=0.
 \end{align*}
We omit the details for conciseness.
\end{rem}

\section{  Proof of Theorem \ref{MRa}}

In this section, we shall prove our convergence results by combining the
modulated energy method with the Strichartz's estimate of linear wave
equations and  employing  the weak convergence method.

\begin{proof}[Proof of Theorem \ref{MRa}]
  We divide the proof into several steps.

\medskip
\emph{Step 1: Basic energy estimates and compact arguments.}

By the assumption on the initial data  we obtain, from the energy
inequality \eqref{be}, that the total energy
$\mathcal{E}^\epsilon(t)$ has a uniform upper bound  for a.e. $t\in
[0,T], T>0$. This uniform bound
implies that $\rho^\epsilon |\u^\epsilon|^2$ and
$ \big((\rho^\epsilon)^\gamma-1-\gamma(\rho^\epsilon-1)\big)/{\epsilon^2}$
are bounded in $L^\infty(0,T;L^1)$ and $ \H^\epsilon $ is
bounded in $L^\infty(0,T;L^2)$.
By an analysis similar to that used in \cite{LM98},  we find
\begin{equation}\label{L2g}
\int_{\mathbb{R}^d}\frac{1}{\epsilon^2}|\rho^\epsilon-1|^2
1_{\{|\rho^\epsilon-1|\leq \frac12\}}
  +\int_{\mathbb{R}^d}\frac{1}{\epsilon^2}|\rho^\epsilon-1|^\gamma
  1_{\{|\rho^\epsilon-1|\geq \frac12\}}   \leq C,
\end{equation}
which implies   that
\begin{equation}\label{re}
    \rho^\epsilon \rightarrow    1 \ \
    \text{strongly in}\ \
C([0,T],L^\gamma_2).
\end{equation}
By the results of  \cite{DG99}, we have the following estimate on
$\u^\epsilon$,
\begin{align}\label{uepsilon}
\|\u^\epsilon\|^2_{L^2}\leq C+C\epsilon^{4/d}\|\nabla
\u^\epsilon\|^2_{L^2},\quad  d=2,3.
\end{align}
Furthermore, the fact that $\rho^\epsilon |\u^\epsilon|^2$ and $
|\H^\epsilon|^2$ are bounded in  $L^\infty(0,T;L^1)$ implies the
following convergence (up to the extraction of a subsequence
$\epsilon_n$):
\begin{align*}
& \sqrt{\rho^\epsilon}  \u^\epsilon   \  \text{converges weakly-$\ast$}
\,\, \text{to some}\, \mathbf{J} \, \,\text{in}\,
L^\infty(0,T;L^2),\\
&  \H^\epsilon \    \text{converges weakly-$\ast$}
\,\, \text{to some}\, \mathbf{K} \, \,\text{in}\,
L^\infty(0,T;L^2).
\end{align*}

Our main task in this section is to show that $\mathbf{J}=\u$ and  $\mathbf{K}=\H$ in some sense,
where $(\u,\H)$  is the strong solution to the ideal
incompressible MHD equations \eqref{a2l}-\eqref{a2n}.

\medskip

\emph{Step 2: Description and cancelation of the oscillations. }

In order to describe the oscillations caused by the initial data,
we use the ideas introduced in \cite{S94,M01b} and the
dispersion property of the linear wave equations \cite{DG99,M01b}.

We project the momentum equation  \eqref{a2j} on the
``gradient vector-fields'' to  find
\begin{align}\label{Pr}
&\partial_t Q(\rho^\epsilon\u^\epsilon)+Q[\dv(\rho^\epsilon\u^\epsilon\otimes\u^\epsilon)]
  - (2\mu^\epsilon+\lambda^\epsilon)\nabla\dv\u^\epsilon+\frac12\nabla(|{\H}^\epsilon|^2)\nonumber\\
 &\quad -Q[({\H}^\epsilon\cdot \nabla
  ){\H}^\epsilon] +\frac{a}{\epsilon^2}\nabla \big((\rho^\epsilon)^\gamma
 -1-\gamma (\rho^\epsilon-1)\big)+\frac{a\gamma}{\epsilon^2}\nabla
 (\rho^\epsilon-1)=0.
\end{align}
Below  we    assume $a\gamma=1$ for simplicity, otherwise we can
change $\varphi^\epsilon$ to $ \varphi^\epsilon/({a\gamma}).$
Noticing $\rho^\epsilon=1+\epsilon\varphi^\epsilon $,  we can write
\eqref{a2i} and \eqref{Pr} as
\begin{align*}
&\epsilon\partial_t \varphi^\epsilon+\dv Q(\rho^\epsilon \u^\epsilon)=0,\\
 &\epsilon\partial_t  Q(\rho^\epsilon \u^\epsilon)+ \nabla \varphi^\epsilon=\epsilon\mathbf{F}^\epsilon,
\end{align*}
where $ \mathbf{F}^\epsilon$ is given by
\begin{align*}
\mathbf{F}^\epsilon=&-Q[\dv(\rho^\epsilon\u^\epsilon\otimes\u^\epsilon)]
  +(2\mu^\epsilon+\lambda^\epsilon)\nabla\dv\u^\epsilon-\frac12\nabla(|{\H}^\epsilon|^2)\\
  & +Q[({\H}^\epsilon\cdot \nabla
  ){\H}^\epsilon] -\frac{1}{\gamma\epsilon^2}\nabla \big((\rho^\epsilon)^\gamma
 -1-\gamma (\rho^\epsilon-1)\big).
\end{align*}
Therefore, we introduce the following group defined by
$\mathcal{L}(\tau)=e^{\tau L}$, $\tau \in \mathbb{R}$, where $L$ is
the operator defined on $\mathcal{D}'\times (\mathcal{D}')^{d}$ by
$$
L\Big(\begin{array}{c}
 \phi   \\
\mathbf{v}
\end{array}
\Big) =\Big(\begin{array}{c}
 -\text{div} \mathbf{v}  \\
-\nabla \phi
\end{array}
\Big).
$$
Then, it is easy to check that $e^{\tau L}$ is an isometry on each
 $H^r\times (H^r)^d$ for all $r\in \mathbb{R}$ and for all $\tau \in
 \mathbb{R}$.
Denoting
$$
\Big(\begin{array}{c}
 \bar \phi (\tau)   \\
\bar{\mathbf{v}}(\tau)
\end{array}
\Big) =e^{\tau L}\Big(\begin{array}{c}
 \phi  \\
\mathbf{v}
\end{array}
\Big),
$$
then we have
$$
\frac{\partial \bar \phi}{\partial \tau}=-\text{div} \bar{\mathbf{v}}, \quad
 \frac{\partial \bar{\mathbf{v}}}{\partial \tau}=-\nabla   \bar \phi.
$$
Thus, $\frac{\partial^2\bar \phi}{\partial \tau ^2}-\Delta \bar \phi
 =0$.

Let $(\phi^\epsilon, \mathbf{g}^\epsilon=\nabla q^\epsilon)$ be the solution
of the following system
\begin{align}
   &   \frac{\partial \phi^\epsilon}{\partial
 t}=-\frac{1}{\epsilon}\text{div} \g^\epsilon, \qquad   \phi^\epsilon|_{t=0} =\Pi^\epsilon_0,
 \label{ca} \\
   &  \frac{\partial \g^\epsilon}{\partial t}=-\frac{1}{\epsilon}\nabla
 \phi^\epsilon,  \qquad \ \ \
 \g^\epsilon|_{t=0} =Q(\sqrt{\rho^\epsilon_0}\u^\epsilon_0).\label{cb}
 \end{align}
Here we have used $ Q(\sqrt{\rho^\epsilon_0}\u^\epsilon_0)$ as an
approximation of $Q( {\rho^\epsilon_0}\u^\epsilon_0)$ since
$\|Q({\rho^\epsilon_0}\u^\epsilon_0)-Q(\sqrt{\rho^\epsilon_0}\u^\epsilon_0)\|_{L^1}\rightarrow
0$ as $\epsilon \rightarrow 0$.

Our main idea is to use $ \phi^\epsilon$ and $\g^\epsilon$ as test
functions and plug them into the total energy
$\mathcal{E}^\epsilon(t)$ to cancel the oscillations. We introduce
the following regularization  for the  initial data,
$\Pi^{\epsilon,\delta}_0=\Pi^\epsilon_0 \ast \chi^\delta$,
$Q^\delta( \sqrt{\rho^\epsilon_0}\u^\epsilon_0)=Q(
\sqrt{\rho^\epsilon_0}\u^\epsilon_0) \ast \chi^\delta$, and denote
by $(\phi^{\epsilon,\delta},\g^{\epsilon,\delta}=\nabla
q^{\epsilon,\delta})$ the corresponding solution to the equations
 \eqref{ca}-\eqref{cb}
with initial data
$\phi^{\epsilon,\delta}|_{t=0}=\Pi^{\epsilon,\delta}_0$,
$\g^{\epsilon,\delta}|_{t=0}=Q^\delta(
\sqrt{\rho^\epsilon_0}u^\epsilon_0)$. Here $\chi \in
C^\infty_0(\mathbb{R}^d)$ is the Friedrich's mollifier, i.e., $\int_{\mathbb{R}^d}
\chi =1$ and $\chi^\delta(x)=(1/\delta^d)\chi(x/\delta)$. Since the
equations \eqref{ca}-\eqref{cb} are linear, it is easy to
verify that $\phi^{\epsilon,\delta}=\phi^{\epsilon}\ast \chi^\delta ,
\g^{\epsilon,\delta}=\g^{\epsilon}\ast \chi^\delta$.

Using the Strichartz estimate of the linear wave
 equations \cite{DG99}, we have
\begin{equation}\label{cc}
\bigg\|\Big(\begin{array}{c}
  \phi^{\epsilon,\delta}     \\
\nabla q^{\epsilon,\delta}
\end{array}
\Big)\bigg\|_{L^l(\mathbb{R}, W^{s,r})} \leq
C\epsilon^{1/l}\bigg\|\bigg(\begin{array}{c}
 \Pi^{\epsilon,\delta}_0    \\
Q^\delta( \sqrt{\rho^\epsilon_0}\u^\epsilon_0)
\end{array}
\Big)\bigg\|_{H^{s+\sigma}}
\end{equation}
for all $l,r> 2$ and $\sigma>0$ such that
$$
\frac{2}{r}=(d-1)\Big(\frac12-\frac{1}{l}\Big), \qquad \sigma
=\frac{d+1}{d-1}.
$$
The estimate \eqref{cc} implies that for arbitrary but fixed $\delta$ and
for all $s\in \mathbb{R}$, we have
\begin{equation} \label{cd}
  \phi^{\epsilon,\delta},\,\, \g^{\epsilon,\delta}\rightarrow 0 \quad
 \text{in} \ \ L^l(\mathbb{R}, W^{s,r} ) \ \ \text{as}\ \
\epsilon \rightarrow 0.
\end{equation}

%

\medskip
\emph{Step 3: The modulated energy functional and uniform estimates.}

We first recall the energy inequality of the compressible
MHD equations \eqref{a2i}-\eqref{a2k} for almost all
$t$,
 \begin{align}\label{ce}
 & \frac12\int_{\mathbb{R}^d}\Big[\rho^\epsilon(t)|\u^\epsilon|^2(t)+|\H^\epsilon|^2(t)
    +(\Pi^\epsilon(t))^2
      \Big]
+\mu^\epsilon \int^t_0\! \int_{\mathbb{R}^d}|\nabla \u^\epsilon|^2 \nonumber\\
&\ \  +(\mu^\epsilon+\lambda^\epsilon)\int^t_0\!
\int_{\mathbb{R}^d}|{\rm div}\u^\epsilon |^2
+\nu^\epsilon\int^t_0\! \int_{\mathbb{R}^d}|\nabla \H^\epsilon|^2\nonumber\\
& \ \ \   \leq  \frac12\int_{\mathbb{R}^d}\Big[\rho_0^\epsilon|\u_0^\epsilon|^2
+|\H_0^\epsilon|^2+(\Pi^\epsilon_0)^2     \Big] .
 \end{align}
The conservation of energy for the ideal incompressible MHD equations
\eqref{a2l}-\eqref{a2n} reads
\begin{equation}\label{cf}
  \frac12 \int_{\mathbb{R}^d}\big[|\u|^2(t)+ |\H|^2(t)\big]   =\frac12  \int_{\mathbb{R}^d}
  \big[|\u_0|^2+|\H_0|^2\big].
\end{equation}
From the system \eqref{ca}-\eqref{cb} we get that, for all $t$,
\begin{equation}\label{cg}
\frac12 \int_{\mathbb{R}^d}
\Big[|\phi^{\epsilon,\delta}|^2+|\g^{\epsilon,\delta}|^2\Big](t)=
\frac12 \int_{\mathbb{R}^d}
\Big[|\phi^{\epsilon,\delta}|^2+|\g^{\epsilon,\delta}|^2\Big](0).
\end{equation}
Using $\phi^{\epsilon,\delta}$ as a test function and noticing
$\rho^\epsilon=1+\epsilon \varphi^\epsilon$, we obtain the following
weak formulation of the continuity equation \eqref{a2i}
\begin{equation}\label{ch}
  \int_{\mathbb{R}^d}
 \phi^{\epsilon,\delta}(t)\varphi^\epsilon(t)+\frac{1}{\epsilon}\int^t_0\int_{\mathbb{R}^d}
\big[\text{div}(\nabla
q^{\epsilon,\delta})\varphi^\epsilon+\mbox{div}(\rho^\epsilon
\u^\epsilon) \phi^{\epsilon,\delta}\big] =\int_{\mathbb{R}^d}
\phi^{\epsilon,\delta}(0)\varphi^\epsilon_0.
\end{equation}
We use $\u$ and $\g^{\epsilon,\delta}=\nabla q^{\epsilon,\delta}$ as
  test functions to  the momentum equation \eqref{a2j}
respectively  to deduce
\begin{align}\label{ci}
&  \int_{\mathbb{R}^d}(\rho^\epsilon \u^\epsilon \cdot \u)(t)
+\int^t_0\int_{\mathbb{R}^d}\rho^\epsilon \u^\epsilon \cdot \big[(\u\cdot \nabla)
\u-(\H\cdot \nabla)\H+\nabla p+\frac12\nabla(|\H|^2)\big] \nonumber\\
&  -\int^t_0\int_{\mathbb{R}^d}\big[(\rho^\epsilon \u^\epsilon \otimes
\u^\epsilon)\cdot \nabla \u+   (\H^\epsilon\cdot \nabla)\H^\epsilon \cdot \u
 - \mu^\epsilon  \nabla  \u^\epsilon  \cdot \nabla \u\big] = \int_{\mathbb{R}^d}
  \rho^\epsilon_0 \u^\epsilon_0 \cdot \u_0,
\end{align}
and
\begin{align}\label{cj}
&  \int_{\mathbb{R}^d}(\rho^\epsilon \u^\epsilon \cdot \nabla
q^{\epsilon,\delta})(t) +\int^t_0\int_{\mathbb{R}^d}\rho^\epsilon \u^\epsilon
\cdot\Big(\frac{1}{\epsilon}\nabla
 \phi^{\epsilon,\delta}\Big)
-\int^t_0\int_{\mathbb{R}^d}(\rho^\epsilon \u^\epsilon \otimes \u^\epsilon)\cdot
\nabla
 \g^{\epsilon,\delta}\nonumber\\
&\quad  +\int^t_0\int_{\mathbb{R}^d}\big[\mu^\epsilon \nabla \u^\epsilon \cdot \nabla
\g^{\epsilon,\delta} +(\mu^\epsilon+\lambda^\epsilon)\text{div}\u^\epsilon \text{div}\,
 \g^{\epsilon,\delta}\big]\nonumber\\
& \quad - \int^t_0\int_{\mathbb{R}^d}(\H^\epsilon\cdot \nabla)\H^\epsilon \cdot \g^{\epsilon,\delta}
 -\int^t_0\int_{\mathbb{R}^d}\frac12|\H^\epsilon|^2\dv\g^{\epsilon,\delta}\nonumber\\
& \quad
 -\int^t_0\int_{\mathbb{R}^d}
\Big(\frac{1}{\epsilon}\varphi^\epsilon+\frac{\gamma-1}{2}(\Pi^\epsilon)^2\Big)\text{div}\,
\g^{\epsilon,\delta}  = \int_{\mathbb{R}^d}\rho^\epsilon_0 \u^\epsilon_0 \cdot
\g^{\epsilon,\delta}(0).
\end{align}
Similarly, using  $\H$ as a test function to the magnetic field
equation \eqref{a2k}, we get
\begin{align}\label{cj1}
&  \int_{\mathbb{R}^d}(\H^\epsilon \cdot \H)(t)
+\int^t_0\int_{\mathbb{R}^d}\H^\epsilon \cdot\big[(\u\cdot \nabla)
\H-(\H\cdot \nabla)\u\big] + \nu^\epsilon
\int^t_0\int_{\mathbb{R}^d}
\nabla  \H^\epsilon  \cdot \nabla \H\nonumber\\
&\quad +\int^t_0\int_{\mathbb{R}^d}\big[(\dv  {\u}^\epsilon) { \H}^\epsilon
+ ( {\u}^\epsilon \cdot \nabla) {\H}^\epsilon  - ( {\H}^\epsilon\cdot \nabla)
 {\u}^\epsilon\big]\cdot \H  = \int_{\mathbb{R}^d} \H^\epsilon_0 \cdot \H_0.
\end{align}
Summing up \eqref{ce}, \eqref{cf} and \eqref{cg}, and inserting
\eqref{ch}-\eqref{cj1} into the resulting inequality,
 we can deduce the following
inequality by a straightforward computation
\begin{align}\label{ck}
&   \frac{1}{2}\int_{\mathbb{R}^d}\Big\{  |\sqrt{\rho^\epsilon} \u^\epsilon
-\u-\g^{\epsilon,\delta}|^2(t)+ |  \H^\epsilon
-\H|^2(t)
  +(\Pi^\epsilon-\phi^{\epsilon,\delta})^2(t)\Big\}\nonumber\\
&\quad
+ {\mu^\epsilon}  \int^t_0\int_{\mathbb{R}^d}|\nabla \u^\epsilon|^2
+ (\mu^\epsilon+\lambda^\epsilon)\int^t_0\! \int_{\mathbb{R}^d}|{\rm div}
\u^\epsilon |^2+ {\nu^\epsilon} \int^t_0\int_{\mathbb{R}^d}|\nabla \H^\epsilon |^2\nonumber\\
& \leq \frac{1}{2}\int_{\mathbb{R}^d}\Big\{ |\sqrt{\rho^\epsilon}
\u^\epsilon -\u-\g^{\epsilon,\delta}|^2(0)+|\H^\epsilon -\H |^2(0)
+(\Pi^\epsilon_0-\phi^{\epsilon,\delta}(0))^2\Big\}\nonumber\\
&\quad+{\mu^\epsilon}  \int^t_0\int_{\mathbb{R}^d}\nabla
\u^\epsilon\cdot\nabla \u +       {\nu^\epsilon}
\int^t_0\int_{\mathbb{R}^d}\nabla \H^\epsilon\cdot\nabla \H
                  +R_1^\epsilon(t)+R_2^\epsilon(t)+ R_3^{\epsilon,\delta}(t),
\end{align}
where
\begin{align*}
R_1^\epsilon(t)=& -\int^t_0\int_{\mathbb{R}^d}\rho^\epsilon
\u^\epsilon\cdot [(\H\cdot \nabla) \H]
    -\int^t_0\int_{\mathbb{R}^d}
(\H^\epsilon\cdot \nabla) \H^\epsilon \cdot\u \nonumber\\
& +\int^t_0\int_{\mathbb{R}^d}\H^\epsilon \cdot\big[(\u\cdot \nabla)
\H-(\H\cdot
\nabla)\u\big]+\frac{1}{2}\int^t_0\int_{\mathbb{R}^d}\rho^\epsilon
\u^\epsilon\cdot \nabla(|\H|^2)\nonumber\\
&
 +\int^t_0\int_{\mathbb{R}^d}\big[(\dv  {\u}^\epsilon) { \H}^\epsilon
 + ( {\u}^\epsilon \cdot \nabla) {\H}^\epsilon
 - ( {\H}^\epsilon\cdot \nabla) {\u}^\epsilon\big]\cdot
 \H,\nonumber\\
R_2^\epsilon(t)=&\int^t_0\int_{\mathbb{R}^d}\rho^\epsilon
\u^\epsilon \cdot\big[ (\u\cdot \nabla) \u +\nabla p
\big]-\int^t_0\int_{\mathbb{R}^d}(\rho^\epsilon \u^\epsilon\otimes
\u^\epsilon)\cdot \nabla u,\nonumber\\
R_3^{\epsilon,\delta}(t)=&\int_{\mathbb{R}^d}
\sqrt{\rho^\epsilon}-1)\sqrt{\rho^\epsilon}\u^\epsilon\cdot
\g^{\epsilon,\delta}(t)-\int_{\mathbb{R}^d}
\sqrt{\rho^\epsilon}-1)\sqrt{\rho^\epsilon}\u^\epsilon\cdot
\g^{\epsilon,\delta}(0) \nonumber\\
&\int_{\mathbb{R}^d}
(\sqrt{\rho^\epsilon}-1)\sqrt{\rho^\epsilon}\u^\epsilon\cdot
\g^{\epsilon,\delta}(t)-\int_{\mathbb{R}^d}
\big[(\Pi^\epsilon-\varphi^\epsilon)\phi^{\epsilon,\delta}\big](t)\nonumber\\
&   -\int^t_0\int_{\mathbb{R}^d}(\rho^\epsilon \u^\epsilon
\otimes \u^\epsilon)\cdot \nabla \g^{\epsilon,\delta}\nonumber
+\int^t_0\int_{\mathbb{R}^d} \big[\mu^\epsilon \nabla \u^\epsilon
\cdot \nabla \g^{\epsilon,\delta}
+(\mu^\epsilon+\lambda^\epsilon)\text{div}\u^\epsilon \text{div}
\g^{\epsilon,\delta}\big]\nonumber\\
  & - \frac{\gamma-1}{2}\int^t_0\int_{\mathbb{R}^d}
(\Pi^\epsilon)^2 \text{div} \g^{\epsilon,\delta}
-\int_{\mathbb{R}^d}
(\sqrt{\rho^\epsilon}-1)\sqrt{\rho^\epsilon}\u^\epsilon\cdot
\g^{\epsilon,\delta}(0)\nonumber\\
&
-\int^t_0\int_{\mathbb{R}^d}\frac12|\H^\epsilon|^2\dv\g^{\epsilon,\delta}
 - \int^t_0\int_{\mathbb{R}^d}(\H^\epsilon\cdot \nabla)\H^\epsilon \cdot \g^{\epsilon,\delta}
+\int_{\mathbb{R}^d}(\Pi^\epsilon_0-\varphi^\epsilon_0)\phi^{\epsilon,\delta}(0).\nonumber
\end{align*}

We first deal with $R_1^\epsilon(t)$ and $R_2^\epsilon(t)$ on the
right-hand side of the inequality \eqref{ck}. Denoting
$\mathbf{w}^{\epsilon,\delta}=\sqrt{\rho^\epsilon} \u^\epsilon
-\u-\g^{\epsilon,\delta}$,   $\mathbf{Z}^{\epsilon}= \H^\epsilon
-\H$,  integrating by parts and making use of the facts that ${\rm
div}\, \H^\epsilon =0, {\rm div}\, \u =0,$ and ${\rm div}\,\H =0$,
we obtain that
\begin{align}\label{ck1}
R_1^\epsilon(t)
 = &  -\int^t_0\int_{\mathbb{R}^d}\rho^\epsilon
\u^\epsilon\cdot [(\H\cdot \nabla) \H] +\int^t_0\int_{\mathbb{R}^d}
(\H^\epsilon\cdot \nabla) \u \cdot \H^\epsilon \nonumber\\
& +\int^t_0\int_{\mathbb{R}^d}(\u\cdot \nabla)\H\cdot \H^\epsilon
-\int^t_0\int_{\mathbb{R}^d}(\H\cdot \nabla)\u  \cdot \H^\epsilon  \nonumber\\
&  - \int^t_0\int_{\mathbb{R}^d}( {\u}^\epsilon \cdot \nabla)\H \cdot {\H}^\epsilon
 + \int^t_0\int_{\mathbb{R}^d}( {\H}^\epsilon\cdot \nabla) \H \cdot {\u}^\epsilon
 +\frac{1}{2}\int^t_0\int_{\mathbb{R}^d}\rho^\epsilon
\u^\epsilon\cdot \nabla(|\H|^2)  \nonumber\\
 = & \int^t_0\int_{\mathbb{R}^d}(1-\rho^\epsilon)
\u^\epsilon\cdot [(\H\cdot \nabla) \H]
+\int^t_0\int_{\mathbb{R}^d}[(\H^\epsilon-\H)\cdot \nabla]\u  \cdot (\H^\epsilon-\H)\nonumber\\
&+\int^t_0\int_{\mathbb{R}^d}[(\H^\epsilon-\H)\cdot \nabla]\H  \cdot (\u^\epsilon-\u)
- \int^t_0\int_{\mathbb{R}^d}[(\u^\epsilon-\u)\cdot \nabla]\H  \cdot (\H^\epsilon-\H)\nonumber\\
& +\frac12\int^t_0\int_{\mathbb{R}^d}(\rho^\epsilon-1){\u}^\epsilon \nabla(| \H|^2)\nonumber\\
\leq & \int^t_0\int_{\mathbb{R}^d}(1-\rho^\epsilon)
\u^\epsilon\cdot [(\H\cdot \nabla) \H)]
+\int^t_0    \|\mathbf{Z}^\epsilon(s)\|^2_{L^2}\| \nabla \u(s)\|_{L^\infty}  ds\nonumber\\
&+  \int^t_0   \big[\|\mathbf{w}^{\epsilon,\delta}(s)\|^2_{L^2} +
\|\mathbf{Z}^\epsilon(s)\|^2_{L^2}\big] \| \nabla \H(s)\|_{L^\infty}
ds
\nonumber\\
&+\int^t_0\!\int_{\mathbb{R}^d}(\mathbf{Z}^\epsilon \cdot \nabla)\H
\cdot [(1-\sqrt{\rho^\epsilon})\u^\epsilon+\g^{\epsilon,\delta}]
\nonumber\\
&- \int^t_0\!\int_{\mathbb{R}^d}\big\{[(1-\sqrt{\rho^\epsilon})\u^\epsilon
+\g^{\epsilon,\delta}]\cdot \nabla\big\}\H  \cdot   \mathbf{Z}^\epsilon
+\frac12\int^t_0\int_{\mathbb{R}^d}(\rho^\epsilon-1){\u}^\epsilon \nabla(|\H|^2)
\end{align}
and
\begin{align}\label{cl}
 R_2^\epsilon(t)
    =& -\int^t_0\int_{\mathbb{R}^d}(\mathbf{w}^{\epsilon,\delta}\otimes
 \mathbf{w}^{\epsilon,\delta})\cdot \nabla \u
       +\int^t_0\int_{\mathbb{R}^d}(\rho^\epsilon-\sqrt{\rho^\epsilon})\u^\epsilon\cdot
 ((\u\cdot \nabla) \u)\nonumber\\
   & -\int^t_0\int_{\mathbb{R}^d}(\g^{\epsilon,\delta}\cdot \nabla) \u\cdot
 \mathbf{w}^{\epsilon,\delta}
    +\int^t_0\int_{\mathbb{R}^d}[(\sqrt{\rho^\epsilon}\u^\epsilon-\u)\cdot \nabla] \u
 \cdot \g^{\epsilon,\delta}\nonumber\\
   &     + \int^t_0 \int_{\mathbb{R}^d}\rho^\epsilon \u^\epsilon \cdot  \nabla p
    -\int^t_0\int_{\mathbb{R}^d}(\sqrt{ \rho^\epsilon}\u^\epsilon- \u) \cdot \nabla
 \big(\frac{|\u|^2}{2}\big).
    \end{align}

Substituting \eqref{ck1} and \eqref{cl} into the inequality \eqref{ck}, we conclude
\begin{align}\label{cm}
 &
 \|\mathbf{w}^{\epsilon,\delta}(t)\|^2_{L^2}+
 \|\mathbf{Z}^{\epsilon}(t)\|^2_{L^2}+\|\Pi^\epsilon(t)-\phi^{\epsilon,\delta}(t)\|^2_{L^2}
  + 2{\mu^\epsilon} \int^t_0\int_{\mathbb{R}^d}|\nabla \u^\epsilon|^2 \nonumber\\
& + 2(\mu^\epsilon+\lambda^\epsilon)\int^t_0\!
\int_{\mathbb{R}^d}|{\rm div}\u^\epsilon |^2 + 2{\nu^\epsilon}
\int^t_0\int_{\mathbb{R}^d}|\nabla \H^\epsilon|^2
\nonumber\\
\leq &
\|\mathbf{w}^{\epsilon,\delta}(0)\|^2_{L^2}+\|\mathbf{Z}^{\epsilon}(0)\|^2_{L^2}
+\|\Pi^\epsilon_0-\phi^{\epsilon,\delta}(0)\|^2_{L^2}\nonumber\\
& +2C \int^t_0 \big(\|\mathbf{w}^{\epsilon,\delta}(s)\|^2_{L^2}+
 \|\mathbf{Z}^{\epsilon}(s)\|^2_{L^2}\big)\big(\|\nabla \u(s)\|_{L^\infty}+\|\nabla
\H(s)\|_{L^\infty}\big)ds\nonumber\\
&   +2S^{\epsilon,\delta}_1(t)+ 2\sum_{i=2}^7S^\epsilon_2(t),
\end{align}
where
\begin{align*}
S^{\epsilon,\delta}_1(t)=   &    R^{\epsilon,\delta}_1(t)
+\int^t_0\!\int_{\mathbb{R}^d}(\mathbf{Z}^\epsilon \cdot \nabla)
\H  \cdot\g^{\epsilon,\delta}- \int^t_0\!\int_{\mathbb{R}^d} (\g^{\epsilon,\delta}\cdot \nabla)\H
\cdot \mathbf{Z}^\epsilon\\
& -\int^t_0\int_{\mathbb{R}^d}(\g^{\epsilon,\delta}\cdot
\nabla) \u\,
  \mathbf{w}^{\epsilon,\delta}+\int^t_0\int_{\mathbb{R}^d}[(\sqrt{\rho^\epsilon}\u^\epsilon-\u)
    \cdot \nabla] \u \cdot\g^{\epsilon,\delta},\nonumber\\
 S^{\epsilon}_2(t)= & {\mu^\epsilon}
\int^t_0\int_{\mathbb{R}^d}\nabla \u^\epsilon\cdot\nabla \u
+       {\nu^\epsilon}  \int^t_0\int_{\mathbb{R}^d}\nabla \H^\epsilon\cdot\nabla \H, \\
S^{\epsilon}_3(t)=&\int_{\mathbb{R}^d}
\big[(\sqrt{\rho^\epsilon}-1)\sqrt{\rho^\epsilon}\u^\epsilon\cdot
 \u \big](t)  -\int_{\mathbb{R}^d}
\big[(\sqrt{\rho^\epsilon}-1)\sqrt{\rho^\epsilon}\u^\epsilon\cdot
 \u \big](0)\nonumber\\
  &     +\int^t_0\int_{\mathbb{R}^d}(\rho^\epsilon-\sqrt{\rho^\epsilon})
  \u^\epsilon\cdot ((\u\cdot \nabla )\u),\\
S^{\epsilon}_4(t)= &
\int^t_0\!\int_{\mathbb{R}^d}(\mathbf{Z}^\epsilon \cdot \nabla) \H
\cdot [(1-\sqrt{\rho^\epsilon})\u^\epsilon]  - \int^t_0
\!\int_{\mathbb{R}^d}\big\{[(1-\sqrt{\rho^\epsilon})\u^\epsilon
]\cdot \nabla\big\}\H  \cdot   \mathbf{Z}^\epsilon, \nonumber\\
S^{\epsilon}_5(t)= & \int^t_0\int_{\mathbb{R}^d}(1-\rho^\epsilon)
\u^\epsilon\cdot [(\H\cdot \nabla) \H)]
+\frac12 \int^t_0\int_{\mathbb{R}^d}(\rho^\epsilon-1)\u^\epsilon\cdot \nabla (| \H|^2),\\
S^{\epsilon}_6(t)=&
    \int^t_0 \int_{\mathbb{R}^d}\rho^\epsilon \u^\epsilon \cdot \nabla p,\\
S^{\epsilon}_7(t)=& -\int^t_0\int_{\mathbb{R}^d}(\sqrt{
\rho^\epsilon} \u^\epsilon - \u) \cdot \nabla
\big(\frac{|\u|^2}{2}\big).
\end{align*}

\medskip
\emph{Step 4: Convergence of the  modulated energy functional.}

To show the convergence of the modulated energy functional and to
finish our proof,
 we have to  estimate the reminders $ S^{\epsilon,\delta}_1(t)$ and  $
 S^\epsilon_i(t), i=2,\dots,7$. We remark that the terms $ S^{\epsilon}_3(t)$,
 $S^{\epsilon}_6(t)$ and $S^{\epsilon}_7(t)$ where the magnetic
field is not included are estimated in the similar way as in
\cite{M01b}.

First in view of \eqref{L2g} and the following two elementary
inequalities
\begin{align}
&|\sqrt{x}-1|^2\leq M|x-1|^\gamma,\;\;\;|x-1|\geq\delta, \;\;
\gamma\geq 1,\label{ine1}\\
&|\sqrt{x}-1|^2\leq M|x-1|^2,\;\;\;x\geq 0 \label{ine2}
\end{align}
for some positive constant $M$ and $0<\delta<1$, we obtain
\begin{align}
 \int_{\mathbb{R}^d}|\sqrt{\rho^\epsilon}-1|^2
=&\int_{|\rho^\epsilon-1|\leq
1/2}|\sqrt{\rho^\epsilon}-1|^2+\int_{|\rho^\epsilon-1|\geq
1/2}|\sqrt{\rho^\epsilon}-1|^2\nonumber\\
\leq&M\int_{|\rho^\epsilon-1|\leq
1/2}|\rho^\epsilon-1|^2+M\int_{|\rho^\epsilon-1|\geq
1/2}|\rho^\epsilon-1|^\gamma\nonumber\\
\leq &M\epsilon^2.\label{ine3}
\end{align}
Then, by using H\"older inequality, the estimates \eqref{ine3} and
\eqref{cc}, the assumption on the initial data, the strong
convergence of $\rho^\epsilon$, the estimates on $\u^\epsilon$,
$\sqrt{\rho^\epsilon}\u^\epsilon$ and $\H^\epsilon$, and the
regularities of $\u$ and $\H$,   we can show that
$S^{\epsilon,\delta}_1(t)$ converges to $0$ for all fixed $\delta$
and almost all $t$, uniformly in $t$ when $\epsilon $ goes to $ 0$.
  The similar arguments can be found in \cite{DG99} and
\cite{HW3}.

Next, we begin to estimate the terms $S^\epsilon_i(t),i=2,\dots, 7$.
For the term $S^{\epsilon}_2(t)$, by Young's inequality and the
regularity of $\u$ and $\H$, we have
\begin{align}\label{rr1}
|S^{\epsilon}_2(t)|\leq \frac{\mu^\epsilon}{2}
\int^t_0\int_{\mathbb{R}^d}|\nabla \u^\epsilon|^2 +
\frac{\nu^\epsilon}{2}\int^t_0\int_{\mathbb{R}^d}|\nabla
\H^\epsilon|^2 +C_T\mu^\epsilon+C_T\nu^\epsilon.
\end{align}

For the  term  $S^{\epsilon}_3(t)$, by H\"older's inequality, the
estimate \eqref{ine3},
  the assumption on the initial data, the estimate on
$\sqrt{\rho^\epsilon}\u^\epsilon$, and the regularity of $\u$, we
infer that
\begin{align}\label{rr2}
|S^{\epsilon}_3(t)|\leq  & C\epsilon +\|\u(t)\|_{L^\infty}
\Big(\int_{\mathbb{R}^d} |\sqrt{\rho^\epsilon}-1|^2 \Big)^{\frac12}
\Big(\int_{\mathbb{R}^d} \rho^\epsilon |\u^\epsilon|^2\Big)^{\frac12}\nonumber\\
&+ \|[(\u\cdot\nabla)\u](t)\|_{L^\infty}
\Big(\int^t_0\int_{\mathbb{R}^d}
|\sqrt{\rho^\epsilon}-1|^2\Big)^{\frac12}
\Big(\int^t_0\int_{\mathbb{R}^d} \rho^\epsilon |\u^\epsilon|^2 \Big)^{\frac12}\nonumber\\
\leq & C_T\epsilon.
\end{align}

For the term $S^{\epsilon}_4(t)$, making use of  the basic
inequality \eqref{be},  the regularity of  $\H$, and H\"{o}lder's
inequality, we get
\begin{align}\label{rr3}
|S^\epsilon_4(t)|\leq &(\|[(\H\cdot \nabla)\H](t)\|_{L^\infty}
+ \|\nabla \H(t)\|_{L^\infty}\cdot\|\H(t)\|_{L^\infty})\nonumber\\
&
\times\Big(\int^t_0\int_{\mathbb{R}^d}|\sqrt{\rho^\epsilon}-1|^2\Big)^{\frac12}
\Big(\int^t_0\int_{\mathbb{R}^d}|\u^\epsilon|^2\Big)^{\frac12}\nonumber\\
& + \|\nabla
\H(t)\|_{L^\infty}\int^t_0\Big[\Big(\int_{\mathbb{R}^d}|\sqrt{\rho^\epsilon}-1|^2\Big)^{\frac12}
  \|\u^\epsilon(\tau)\|_{L^6}  \|\H^\epsilon(\tau)\|_{L^3}\Big]d\tau\nonumber\\
\leq &
C_T\epsilon\Big(\int^t_0\int_{\mathbb{R}^d}|\u^\epsilon|^2\Big)^{\frac12}
+C_T\epsilon \int^t_0
  \|\u^\epsilon(\tau)\|_{L^6}  \|\H^\epsilon(\tau)\|_{L^3} d\tau \equiv Y^\epsilon(t).
\end{align}
Note that there is no uniform estimate on
$||\u^\epsilon||_{L^2(0,T;L^2)}$ in $Y^\epsilon(t)$ (see the
estimate \eqref{uepsilon}). We have to deal with $Y^\epsilon(t)$ in
dimension two and three, respectively. In the case of $d=2$, using
Sobolev's imbedding, the basic inequality \eqref{be}, the assumption
\eqref{abc}, and the inequality \eqref{uepsilon}, we obtain
\begin{align}\label{rr31}
Y^\epsilon(t) \leq & C_T\epsilon
\big[1+\epsilon(\mu^\epsilon)^{-\frac12}\big]+C_T\epsilon
\Big(\int^t_0\|\u^\epsilon(\tau)\|^2_{H^1(\mathbb{R}^2)}d\tau\Big)^{\frac12}
\Big(\int^t_0\|\H^\epsilon(\tau)\|^2_{H^1(\mathbb{R}^2)}d\tau\Big)^{\frac12}\nonumber\\
=
& C_T\epsilon
\big[1+\epsilon(\mu^\epsilon)^{-\frac12}\big]+C_T\epsilon
(\|\u^\epsilon\|_{L^2(0,T;L^2(\mathbb{R}^2))}
+\|\nabla\u^\epsilon\|_{L^2(0,T;L^2(\mathbb{R}^2))})\nonumber\\
& \quad \times (\|\H^\epsilon\|_{L^2(0,T;L^2(\mathbb{R}^2))}+\|\nabla\H^\epsilon\|_{L^2(0,T;L^2(\mathbb{R}^2))})\nonumber\\
\leq & C_T\epsilon
+C_T\epsilon\big[1+(1+\epsilon)(\mu^\epsilon)^{-\frac12}\big]
\cdot \big[1+(\nu^\epsilon)^{-\frac12}\big]\nonumber\\
\leq & C_T \epsilon +C_T \epsilon^{\sigma}\leq C_T
\epsilon^{\sigma},
\end{align}
where $\sigma =1-(\alpha+\beta)/2>0$ due to $0<\alpha+\beta<2$. For
$d=3$, using Sobolev's imbedding, the basic inequality \eqref{be},
the assumption \eqref{abc}, the  inequality \eqref{uepsilon}, and
$\|\u^\epsilon\|_{L^6(\mathbb{R}^3)}\leq C\|\nabla
\u^\epsilon||_{L^2(\mathbb{R}^3)}$,  we obtain
\begin{align}\label{rr32}
 Y^\epsilon(t) \leq & C_T\epsilon
\big[1+\epsilon^{\frac23}(\mu^\epsilon)^{-\frac12}\big]\nonumber\\
& +C_T\epsilon
\Big(\int^t_0\|\nabla\u^\epsilon(\tau)\|^2_{L^2(\mathbb{R}^3)}d\tau\Big)^{\frac12}
\Big(\int^t_0\|\H^\epsilon(\tau)\|^2_{H^1(\mathbb{R}^3)}d\tau\Big)^{\frac12}\nonumber\\
=
& C_T\epsilon
\big[1+\epsilon^{\frac23}(\mu^\epsilon)^{-\frac12}\big]\nonumber\\
& +C_T\epsilon \|\nabla\u^\epsilon\|_{L^2(0,T;L^2(\mathbb{R}^3))}
 (\|\H^\epsilon\|_{L^2(0,T;L^2(\mathbb{R}^3))}+\|\nabla\H^\epsilon\|_{L^2(0,T;L^2(\mathbb{R}^3))})\nonumber\\
\leq &  C_T\epsilon  +C_T\epsilon^{\frac53}
(\mu^\epsilon)^{-\frac12}+ C_T\epsilon (\mu^\epsilon)^{-\frac12}
  \big(1+(\nu^\epsilon)^{-\frac12}\big)\nonumber\\
\leq & C_T \Big(\epsilon +
\epsilon^{\frac53-\frac{\alpha}{2}}+\epsilon
^{1-\frac{\alpha+\beta}{2}}\Big)\leq C_T \epsilon^{\sigma},
\end{align}
where $\sigma =1-(\alpha+\beta)/2>0$.

For the term $S^{\epsilon}_5(t)$, one can utilize the inequality
\eqref{ine3}, the  inequality \eqref{uepsilon}, the estimate on
 $\sqrt{\rho^\epsilon}\u^\epsilon$, the regularity
of  $\H$, and $\rho^\epsilon-1=\rho^\epsilon-\sqrt{\rho^\epsilon}
+\sqrt{\rho^\epsilon} -1$ to deduce
\begin{align}\label{rr4}
|S^\epsilon_5(t)|\leq &(\|[(\H\cdot \nabla)\H](t)\|_{L^\infty}+
\|\nabla (|\H|^2)\|_{L^\infty})
\Big(\int^t_0\int_{\mathbb{R}^d}|\sqrt{\rho^\epsilon}-1|^2\Big)^{\frac12}\nonumber\\
& \times\bigg[
\Big(\int^t_0\int_{\mathbb{R}^d}|\u^\epsilon|^2\Big)^{\frac12}+
\Big(\int^t_0\int_{\mathbb{R}^d}\rho^\epsilon|\u^\epsilon|^2\Big)^{\frac12} \bigg]\nonumber\\
\leq & \left\{
         \begin{array}{ll}
       C_T\epsilon
\big[1+\epsilon (\mu^\epsilon)^{-\frac12}\big] = C_T\epsilon +C_T\epsilon^{2-\frac{\alpha}{2}}\leq C_T\epsilon , & {d=2;} \\
        C_T\epsilon
\big[1+\epsilon^{\frac23}(\mu^\epsilon)^{-\frac12}\big]=C_T\epsilon+C_T\epsilon^{\frac53-\frac{\alpha}{2}}
, \quad & {d=3.}
         \end{array}
       \right.
\end{align}
Here $2-{\alpha}/{2}>1$ and  $5/3-{\alpha}/{2}>0$ since
$0<\alpha<2$.

Using \eqref{ba}, \eqref{L2g} and \eqref{re}, the term
$S^{\epsilon}_6(t)$ can be bounded as follows.
\begin{align}\label{rr5}
 | S^{\epsilon}_6(t)|=&
  \Big|\int^t_0 \int_{\mathbb{R}^d}\rho^\epsilon \u^\epsilon \cdot  \nabla p\Big|\nonumber\\
 =&  \Big| \int_{\mathbb{R}^d}\big\{((\rho^\epsilon-1) p)(t)-((\rho^\epsilon-1)
 p)(0)\big\}- \int^t_0\int_{\mathbb{R}^d}(\rho^\epsilon-1)  \partial_t p \Big|\nonumber\\
  \leq &   \Big(\int_{|\rho^\epsilon-1|\leq 1/2}|\rho^\epsilon-1|^2 \Big)^{\frac12}
  \Big[\Big(\int_{\mathbb{R}^d}|p(t)|^2\Big)^{\frac12}+\Big(\int_{\mathbb{R}^d}|p(0)|^2\Big)^{\frac12}\Big]\nonumber\\
  & +  \Big(\int_{|\rho^\epsilon-1|\geq 1/2}|\rho^\epsilon-1|^\gamma \Big)^{\frac{1}{\gamma}}
  \Big[\Big(\int_{\mathbb{R}^d}|p(t)|^{\frac{\gamma}{\gamma-1}}\Big)^{\frac{\gamma-1}{\gamma}}
  +\Big(\int_{\mathbb{R}^d}|p(0)|^{\frac{\gamma}{\gamma-1}}\Big)^{\frac{\gamma-1}{\gamma}}\Big]\nonumber\\
&+\int^t_0 \Big(\int_{|\rho^\epsilon-1|\leq
 1/2}|\rho^\epsilon-1|^2\Big)^{\frac12}\Big(\int_{\mathbb{R}^d}|\partial_tp(t)|^2
\Big)^{\frac12}
\nonumber\\
& + \int^t_0 \Big(\int_{|\rho^\epsilon-1|\geq  1/2}
|\rho^\epsilon-1|^{\gamma}\Big)^{{\frac{1}{\gamma}}}
\Big(\int_{\mathbb{R}^d}| \partial_t p(t)|^{\frac{\gamma}{\gamma-1}} \Big)^{{\frac{\gamma-1}{\gamma}}}\nonumber\\
\leq & C_T(\epsilon+\epsilon^{2/\kappa})\leq C_T \epsilon,
\end{align}
where $\kappa=\min\{2,\gamma\}$  and we have used the conditions
$s>d/2+2$ and $\gamma>1$.

Finally, to estimate the term  $ S^\epsilon_7(t)$, we rewrite it as
\begin{align}\label{rr6}
  S^{\epsilon}_7(t)=& -\int^t_0\int_{\mathbb{R}^d}(\sqrt{ \rho^\epsilon}\u^\epsilon-
 \u) \cdot \nabla \big(\frac{|\u|^2}{2}\big)\nonumber\\
  =&\int^t_0\int_{\mathbb{R}^d}\sqrt{\rho^\epsilon}(\sqrt{\rho^\epsilon}-1)
 \u^\epsilon \cdot \nabla \big(\frac{|\u|^2}{2}\big)
  -\int^t_0\int_{\mathbb{R}^d}\rho^\epsilon \u^\epsilon \cdot \nabla
 \big(\frac{|\u|^2}{2}\big)\nonumber\\
  =& S^{\epsilon}_{71}(t)+S^{\epsilon}_{72}(t),
\end{align}
where
\begin{align*}
  S^{\epsilon}_{71}(t)& = \int^t_0\int_{\mathbb{R}^d}\sqrt{\rho^\epsilon}(\sqrt{\rho^\epsilon}-1) \u^\epsilon
 \cdot
\nabla \big(\frac{|\u|^2}{2}\big),\\
S^{\epsilon}_{72}(t) &=
   \int^t_0\int_{\mathbb{R}^d}(\rho^\epsilon-1)
 \partial_t\big(\frac{|\u|^2}{2}\big)
   - \int_{\mathbb{R}^d}\Big[\Big((\rho^\epsilon-1)
 \big(\frac{|\u|^2}{2}\big)\Big)(t) -\Big((\rho^\epsilon-1) \big(\frac{|\u|^2}{2}\big)\Big)(0)\Big].
\end{align*}
 Applying arguments similar to those used for $S^{\epsilon}_{3}(t)$
and $S^{\epsilon}_{6}(t)$, we arrive at the following boundedness
\begin{align}\label{rr66}
 |S^{\epsilon}_{7}(t)|\leq  |S^{\epsilon}_{71}(t)|+|S^{\epsilon}_{72}(t)|\leq C_T \epsilon.
\end{align}
Thus   by collecting all estimates above and applying  the Gronwall's inequality  we deduce that,  for almost all
$t\in (0,T)$,
\begin{align}\label{cq}
 &
 \|\mathbf{w}^{\epsilon,\delta}(t)\|^2_{L^2}+
 \|\mathbf{Z}^{\epsilon}(t)\|^2_{L^2}+\|\Pi^\epsilon(t)
 -\phi^{\epsilon,\delta}(t)\|^2_{L^2}\nonumber\\
\leq & \bar C
\Big[\|\mathbf{w}^{\epsilon,\delta}(0)\|^2_{L^2}+\|\mathbf{Z}^{\epsilon}(0)\|^2_{L^2}
+ \|\Pi^\epsilon_0-\phi^{\epsilon,\delta}(0)\|^2_{L^2}
+C_T\epsilon^\sigma +\sup_{0\leq s\leq
t}S^{\epsilon,\delta}_1(s)\Big],
\end{align}
where $$\bar C=\exp {\Big\{C\int^T_0\big[\|\nabla \u(s)\|_{L^\infty}
+ \|\nabla\H(s)\|_{L^\infty}\big]ds\Big\}}<+\infty.$$

 Noticing that the projection $P$ is a bounded linear mapping
from $L^2$ to $L^2$, we obtain that
\begin{align}
\sup_{0\leq t\leq
T}\|P(\sqrt{\rho^\epsilon}\u^\epsilon)-\u\|_{L^2}&=\sup_{0\leq t\leq
T}\Big\|P\Big(\sqrt{\rho^\epsilon}\u^\epsilon-\u-\g^{\epsilon,\delta}\Big)\Big\|_{L^2}\nonumber\\
&\leq \sup_{0\leq t\leq
T}\|\sqrt{\rho^\epsilon}\u^\epsilon-\u-\g^{\epsilon,\delta}\|_{L^2}.
\end{align}
 Thanks to  \eqref{cq}, we further  obtain that
\begin{align*}
 &\overline{\lim}_{\epsilon \rightarrow
 0}\|P(\sqrt{\rho^\epsilon}\u^\epsilon)-\u\|_{L^\infty(0,T; L^2)}\\
 \leq & C\bar C \lim_{\delta \rightarrow 0}\Big[\|\mathbf{J}_0-\u_0-Q(\mathbf{J}_0)\ast
 \chi_\delta \|^2_{L^2}+\|\varphi_0-\varphi_0\ast \chi_\delta
 \|^2_{L^2}\Big]
=  0.
\end{align*}
The uniform convergence (in $t$) of
$P(\sqrt{\rho^\epsilon}\u^\epsilon)$ to $\u$ in $L^2(\mathbb{R}^d)$
is proved.

Now we claim that $\mathbf{J}=\u$ and $\mathbf{K}=\H$. Since
$\g^{\epsilon,\delta}$ tends weakly-$\ast$ to zero by \eqref{cc},
$\mathbf{w}^{\epsilon,\delta}$ tends  weakly-$\ast$ to  $\mathbf{J}
- \u$. It is easy to see that $\mathbf{Z}^\epsilon$ tends
weakly-$\ast$ to $\mathbf{K} - \H$.  Hence by \eqref{cq},  we obtain
that
\begin{align}\label{cr}
 &    \|\mathbf{J}-\u\|^2_{L^\infty(0,T; L^2)}+
\|\mathbf{K}-\H\|^2_{L^\infty(0,T; L^2)}\nonumber\\
 & \quad \leq  C\bar C \Big[\|\mathbf{J}_0-\u_0-Q(\mathbf{J}_0)\ast \chi_\delta \|^2_{L^2}
 +\|\varphi_0-\varphi_0\ast \chi_\delta \|^2_{L^2}\Big],
\end{align}
where we have used the fact that $\|
\text{weak-}\ast\text{--}\lim\theta^\epsilon\|_{L^\infty(0,T;L^2)}\leq
\varlimsup_{\epsilon\rightarrow
0}\|\theta^\epsilon||_{L^\infty(0,T;L^2)}$ for any
$\theta^\epsilon\in L^\infty(0,T;L^2)$. Thus  we deduce
$\mathbf{J}=\u$ and $\mathbf{K}=\H$   in $L^\infty(0,T; L^2)$ by
letting $\delta$ go to $0$.

Finally, we show the local strong convergence of
$\sqrt{\rho^\epsilon}\u^\epsilon$ to $ \u$ in $L^r(0,T;
L^2(\Omega))$ for all $1\leq r<+\infty$ on any bounded domain
$\Omega \subset \mathbb{R}^d$. In fact, for all $t\in [0,T]$, we
have
\begin{align*}
\|\sqrt{\rho^\epsilon}\u^\epsilon-\u\|_{L^2(\Omega)}& \leq C(\Omega)
 \|\sqrt{\rho^\epsilon}u^\epsilon-\u-\g^{\epsilon,\delta}\|_{L^2(\Omega)}
 +C(\Omega)\|\g^{\epsilon,\delta}\|_{L^2(\Omega)}\\
&\leq
C(\Omega)\|\sqrt{\rho^\epsilon}\u^\epsilon-\u-\g^{\epsilon,\delta}\|_{L^2(\Omega)}
 +C(\Omega)\|\g^{\epsilon,\delta}\|_{L^q(\Omega)}
\end{align*}
for any $q>2$. Using the estimate  \eqref{cd}, we can take the
limit on $\epsilon $ and then on $\delta $ as above to deduce that
 $\sqrt{\rho^\epsilon}\u^\epsilon$ converges to $ \u$ in $L^r(0,T;
 L^2(\Omega))$. Thus we complete our proof.
\end{proof}


{\bf Acknowledgement:} The authors are very grateful to the referees
for their constructive comments and helpful suggestions, which
considerably improved the earlier version of this paper. This work
was partially done when Fucai Li visited the Institute of Applied
Physics and Computational Mathematics. He would like to thank the
institute for hospitality.
 Jiang was supported by the National Basic Research Program (Grant
No. 2005CB321700) and NSFC (Grant No. 40890154).
   Ju was supported by NSFC(Grant No. 10701011).
Li was supported  by NSFC (Grant No. 10971094).


\end{document}